\newtheorem{thm}{Theorem}
\newtheorem{lem}[thm]{Lemma}
\newtheorem{conj}[thm]{Conjecture}
\newtheorem{proposition}[thm]{Proposition}
\newtheorem{prop}[thm]{Proposition}
\begin{document}

\title[On uniqueness of distribution]
{On uniqueness of distribution of a random variable whose
independent copies span a subspace in $L_p$}

\author{S. Astashkin}
\address{Samara State University, Pavlova 1, Samara, 443011, Russia}
\email{astash@samsu.ru}

\author{F. Sukochev}
\address{School of Mathematics and Statistics, University of New South Wales, Sydney, 2052, Australia.}
\email{f.sukochev@unsw.edu.au}
\author{D. Zanin}
\address{School of Mathematics and Statistics, University of New South Wales, Sydney, 2052, Australia.}
\email{d.zanin@unsw.edu.au}

\renewcommand{\thefootnote}{\fnsymbol{footnote}}

\footnotetext[0]{2010 {\it Mathematics Subject Classification}:
46E30, 46B20, 46B09}
\footnotetext[0]{{\it Key words and phrases}: $L_p$-space, Orlicz sequence space, independent random variables,
$p$-convex function, $q$-concave function, subspaces}
\thanks{\it Authors acknowledge support from the ARC}

\vspace{-7 mm}

\subjclass[2000]{}

\keywords{}

\date{}

\dedicatory{}

\begin{abstract}
Let $1\leq p<2$ and let $L_p=L_p[0,1]$ be the classical $L_p$-space of all (classes of) $p$-integrable functions on $[0,1]$.
It is known that a sequence of independent copies of a mean zero random variable $f\in L_p$
spans in  $L_p$ a subspace isomorphic to some Orlicz sequence space $l_M$.
We present precise connections between $M$ and $f$ and establish conditions under which the distribution of a random variable $f\in L_p$
whose independent copies span $l_M$ in $L_p$ is essentially unique.
\end{abstract}

\maketitle

\bibliographystyle{plain}

\section{Introduction}
It is well known that the class of all subspaces of $L_1=L_1(0,1)$ is very rich and still does not have any
reasonable description. If we consider only symmetric subspaces of $L_1$, that is, subspaces with a symmetric basis or
isomorphs of some symmetric function spaces, then these subspaces are known to be isomorphic to averages of Orlicz spaces \cite{B3, Ray_Sc}.
Far more information is available on subspaces of $L_1$ isomorphic to Orlicz spaces. First of all,
an isomorph of an Orlicz sequence space $l_M\neq l_1$  in  $L_1$  can always be  given by the
span of a sequence of independent identically distributed (i.i.d) random variables.
The latter fact was discovered by  M.I. Kadec  in 1958 \cite{Kad}, who proved that for arbitrary $1\leq p<q<2$ there exists a
symmetrically distributed function $f\in L_p$ ( a $q$-stable  random variable) such that the sequence $\{f_k\}_{k=1}^\infty$ of
independent copies of $f$ spans in $L_p$ a subspace isomorphic to $l_q$.

This direction of study was taken further by J. Bretagnolle and  D. Dacunha-Castelle (see \cite{BD, BD2, B3}). In particular, D. Dacunha-Castelle showed that
for every given mean zero $f\in L_p=L_p(0,1)$, the sequence $\{f_k\}_{k=1}^\infty$ of its independent copies is equivalent in $L_p$ to the unit vector
basis of some Orlicz sequence space $l_M$ \cite[Theorem 1, p.X.8]{B3}. Moreover, J. Bretagnolle and  D. Dacunha-Castelle proved
that an Orlicz function space $L_M=L_M[0,1]$ can be isomorphically embedded into the space $L_p$, $1\leq p<2$, if and only if $M$
is equivalent to a $p$-convex and $2$-concave Orlicz function on $[0,\infty)$ \cite[Theorem IV.3]{BD2}.
%Furthermore, the latter
%paper contains the following result: if $M$ is a $2$-concave Orlicz function on $[0,\infty)$, which is not equivalent to the
%function $g(t)=t, t\in\mathbb{R},$ then there exists $f\in L_1$ such that the sequence $\{f_k\}_{k=1}^\infty$ of independent
%copies of $f$ is isomorphic to $l_M$.
Later on some of these results were independently rediscovered by M. Braverman \cite{Br2,Br3}.

Note that the methods used in \cite{BD, BD2, B3, Br2, Br3} depend heavily on the techniques related to the theory of random processes.
In a recent paper \cite{ASorlicz}, two first named co-authors suggested a different approach to study of this problem,
which is based on methods and ideas from the interpolation theory of operators. In addition, it should
be pointed out that papers  \cite{BD, BD2, B3, Br2, Br3}  concern only with the verification of existence of a
function $f$ such that the sequence of its independent copies is equivalent in $L_p$ to the unit vector basis in some
Orlicz sequence space and do not address the question concerning the determination of $f$, whereas \cite{ASorlicz} is
mainly focused on revealing precise connections between the Orlicz function and the distribution of corresponding
random variable $f$. Among other results, in \cite{ASorlicz}, it is shown the following. Let $1\le p<2$ and let
M be a $p$-convex and $2$-concave Orlicz function on $[0,\infty)$ such that $M(t)\not\sim t^p$ for small $t>0$ and
the function
$$
S(u):=-2pM(u)+(p+1)uM'(u)-u^2M''(u)$$
is positive on $(0,\infty),$ increasing and bounded on $(0,1).$ Then, under some
technical conditions on $M$ (see \cite[Proposition~12 and Theorem~15]{ASorlicz}) the unit vector basis in $l_M$
is equivalent in $L_p$ to the sequence $\{f_k\}_{k=1}^\infty$ of independent copies of an arbitrary
mean zero function $f\in L_p$ such that its distribution function
$$
n_f(\tau):=\lambda\{u:|f(u)|>\tau\},\;\; \tau>0$$
($\lambda$ is the Lebesgue measure) is equivalent to the function $S(1/\tau)$ for $\tau\ge 1.$

The present paper continues this direction of research. Our main result
(Theorem \ref{main theorem}) is a somewhat surprising fact
that in the case, when an Orlicz function $M$ is ~\lq far\rq~ from the extreme
functions $t^p$ and $t^2, 1\leq p<2$, the distribution of a random variable $f\in L_p$ whose independent copies span $l_M$
essentially is equivalent to that of the function $$\mathfrak{m}(t)=\frac{1}{M^{-1}(t)},\quad t>0.$$
%For a given {\color{blue} random variables} $f$ on $[0,1]$, we denote by $n_f(\cdot)$  its distribution function, i.e.,
%$$n_f(\tau)=\lambda\{u:|f(u)|>\tau\}, \tau>0,$$
%{\color{red} where $\lambda$ is the Lebesgue measure.}
%One says that {\color{blue} random variables}'s $f$ and $g$ are equimeasurable if $n_f(\tau)=n_g(\tau)$, $\tau >0$.

%More precisely the following theorem is established:
\begin{thm}\label{main theorem} Let $1\leq p<2$ and let $M$ be a $p-$convex and $2-$concave Orlicz function. The following conditions are equivalent:
\begin{enumerate}[{\rm (i)}]
\item The function $M$ is $(p+\varepsilon)-$convex and $(2-\varepsilon)-$concave for some $\varepsilon>0;$
\item If a sequence $\{f_k\}_{k=1}^{\infty}$ of independent copies of a mean zero random variable  $f\in L_p$ is equivalent in  $L_p$ to
the unit vector basis $\{e_k\}_{k=1}^{\infty}$ in $l_M$, then the distribution function $n_f(\tau)$ is equivalent to that of $\mathfrak{m}$
for large $\tau$.
\item The function $\mathfrak{m}\in L_p$ and any sequence of independent copies of a mean zero random variable equimeasurable with
$\mathfrak{m}$ is equivalent in  $L_p$  to the unit vector basis in $l_M.$
\end{enumerate}
\end{thm}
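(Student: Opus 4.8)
The plan is to treat the whole equivalence through a single analytic invariant of $M$, its \emph{elasticity} $\phi(u):=uM'(u)/M(u)$, and then to feed the outcome into the embedding machinery of \cite{ASorlicz}. Since $l_M$, condition (i), and the indices of $M$ depend only on the equivalence class of $M$, I would at the outset pass to a smooth representative and freely mollify $\phi$ in the variable $\log u$ (a bounded multiplicative change preserves the class). A direct computation rewrites the auxiliary function as
$$\frac{S(u)}{M(u)}=\bigl(2-\phi(u)\bigr)\bigl(\phi(u)-p\bigr)-u\,\phi'(u),\qquad \phi(u)=\frac{uM'(u)}{M(u)},$$
which for a pure power $M(u)=u^{r}$ collapses to $S(u)/M(u)=(2-r)(r-p)$, positive exactly when $p<r<2$ and degenerate at both endpoints. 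Condition (i) is, up to equivalence, precisely the statement that $\phi$ stays in an interval $[p+\varepsilon,\,2-\varepsilon]$, whence $(2-\phi)(\phi-p)\ge\varepsilon^{2}$; choosing the representative so that $u\phi'$ is negligible against this gap yields the key lemma $S(u)\sim M(u)$ near $0$, together with the positivity, monotonicity and boundedness of $S$ demanded in \cite[Proposition~12 and Theorem~15]{ASorlicz}. Controlling the term $u\phi'$ by such a regularization is the one soft spot of this first step.

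To prove (i)$\Rightarrow$(iii), note that $\mathfrak m(t)=1/M^{-1}(t)$ is decreasing with distribution function $n_{\mathfrak m}(\tau)=\lambda\{t:M^{-1}(t)<1/\tau\}=M(1/\tau)$. By the lemma, $n_{\mathfrak m}(\tau)=M(1/\tau)\sim S(1/\tau)$ for large $\tau$, so the embedding theorem of \cite{ASorlicz} applies verbatim: the independent copies of a sign‑symmetrisation of $\mathfrak m$ are equivalent in $L_p$ to the unit vector basis of $l_M$. The integrability $\mathfrak m\in L_p$ is exactly $\int_0^1 M^{-1}(s)^{-p}\,ds<\infty$, which holds if and only if the lower Matuszewska--Orlicz index of $M$ exceeds $p$, i.e.\ the $(p+\varepsilon)$‑convex half of (i). This gives (iii).

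For (iii)$\Rightarrow$(i) and (i)$\Rightarrow$(ii) I would work with the two‑sided estimate underpinning \cite{ASorlicz}, in which the large values of $f$ contribute an $\ell_p$‑average of $f^{*}$ on $(0,1/n)$ and the small values an $\ell_2$‑average on $(1/n,1)$:
$$\frac{1}{M^{-1}(1/n)}\ \sim\ n^{1/p}\Bigl(\int_0^{1/n}f^{*}(s)^{p}\,ds\Bigr)^{1/p}+n^{1/2}\Bigl(\int_{1/n}^{1}f^{*}(s)^{2}\,ds\Bigr)^{1/2}.$$
Inserting $f^{*}=\mathfrak m$ shows that the first integral converges (and matches $1/M^{-1}(1/n)$) precisely when the lower index exceeds $p$, while the second is dominated by its lower endpoint—free of any logarithmic factor—precisely when the upper index is below $2$; hence (iii) forces both strict indices, i.e.\ (i). Conversely, under (i) both terms are individually of order $1/M^{-1}(1/n)$ for \emph{any} mean‑zero $f$ spanning $l_M$, and one inverts the displayed relation to recover $f^{*}(s)\sim\mathfrak m(s)$, equivalently $n_f(\tau)\sim M(1/\tau)=n_{\mathfrak m}(\tau)$, which is (ii). \emph{This inversion is the step I expect to be the main obstacle}, since a priori $f^{*}$ need not be regular and the $\ell_p$‑ and $\ell_2$‑contributions must be disentangled scale by scale; the strictness of the indices is exactly what keeps each contribution comparable to the corresponding truncated integral and thereby pins $f^{*}$ up to equivalence.

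Finally I would close the cycle by proving (ii)$\Rightarrow$(i) through contraposition. If (i) fails on the concavity side (upper index $=2$), the $\ell_2$‑integral above acquires a logarithmic factor, so the copies of $\mathfrak m$ cease to reproduce $l_M=\ell_2$, and one exhibits two genuinely different laws—e.g.\ the Gaussian and a suitable heavier finite‑variance law—whose copies both span $\ell_2$, destroying the uniqueness asserted in (ii). If (i) fails on the convexity side (lower index $=p$), then $\mathfrak m\notin L_p$ and the canonical tail $M(1/\tau)$ is unrealizable within $L_p$, which is incompatible with (iii); via the equivalence (i)$\Leftrightarrow$(iii) already established this disposes of that branch, the only delicate point being to reconcile it with (ii) using the standing nondegeneracy $M\not\sim t^{p}$. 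Combining the four implications yields the full equivalence of (i), (ii) and (iii).
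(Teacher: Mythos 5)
You have the right skeleton (the two-sided relation you display is exactly the paper's Proposition \ref{prelim lemma}, proved via Rosenthal's inequality, and your elasticity identity $S(u)/M(u)=(2-\phi(u))(\phi(u)-p)-u\phi'(u)$ is correct), but the proposal stops precisely where the theorem's content lies. For ${\rm (i)}\Rightarrow{\rm (ii)}$ you yourself flag the inversion of \eqref{main f cond} to recover $f^*\sim\mathfrak m$ as ``the main obstacle'' and offer no mechanism for it; that step is the heart of the paper's proof and is genuinely nontrivial. The paper resolves it as follows: the bound $f^*\leq C_1\mathfrak m$ is free (a decreasing function is dominated by its Ces\`aro $p$-average); then for each $t$ at least one of the two terms in \eqref{main f cond} is $\geq\frac1{2C}\mathfrak m(t)$, and Lemma \ref{large N lemma} --- the quantitative form of the strict indices, $N\sup_t\mathfrak m^2(Nt)/\mathfrak m^2(t)\to0$ and $N^{-1}\sup_t\mathfrak m^p(t/N)/\mathfrak m^p(t)\to0$ --- allows one to split $\int_t^1 f^2=\int_t^{Nt}+\int_{Nt}^1$ (resp.\ $\int_0^t f^p=\int_0^{t/N}+\int_{t/N}^t$), estimate the far piece by $f^*\leq C_1\mathfrak m$ together with the Ces\`aro bounds of Propositions \ref{7and9} and \ref{8and10}, and absorb it, leaving $f(t/N)\geq{\rm const}\cdot\mathfrak m(t)$. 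Without this (or some substitute) scale-by-scale absorption argument, ${\rm (i)}\Rightarrow{\rm (ii)}$ is not proved.

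Your closing implication ${\rm (ii)}\Rightarrow{\rm (i)}$ also contains a false step: ``lower index $=p$ implies $\mathfrak m\notin L_p$'' fails, e.g.\ for $M(t)\sim t^p\log^{-2}(e/t)$ near $0$, which is ($p$-convex, $2$-concave, and) not equivalent to any $(p+\varepsilon)$-convex function, yet $\mathfrak m^p(s)\sim\bigl(s\log^2(e/s)\bigr)^{-1}$ is integrable; the correct dividing line is not integrability but the Ces\`aro bound $\frac1t\int_0^t\mathfrak m^p(s)\,ds\leq C\mathfrak m^p(t)$ of Proposition \ref{7and9}, which is strictly stronger --- the same error infects the ``if and only if'' in your ${\rm (i)}\Rightarrow{\rm (iii)}$. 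Likewise your Gaussian-versus-heavier-law example only treats $l_M=\ell_2$, i.e.\ $M\sim t^2$, and says nothing when the upper index equals $2$ but $M\not\sim t^2$. The paper's cycle avoids all of this: ${\rm (ii)}\Rightarrow{\rm (iii)}$ is essentially immediate (some $f$ spanning $l_M$ exists by Bretagnolle--Dacunha-Castelle, (ii) pins its distribution to that of $\mathfrak m$, and \eqref{main f cond} depends on $f$ only through the equivalence class of $f^*$), while ${\rm (iii)}\Rightarrow{\rm (i)}$ is Proposition \ref{prelim lemma} combined with Theorem \ref{preddverie}. Finally, your entry route through \cite[Proposition~12, Theorem~15]{ASorlicz} is shakier than you suggest: log-scale mollification does yield $S\sim M$ under (i), but those results also demand monotonicity of $S$ on $(0,1)$ and further technical conditions that $S\sim M$ does not provide; the paper deliberately bypasses that machinery by proving Proposition \ref{prelim lemma} directly from Rosenthal's inequality.
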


Observe that even in the simplest  case, when  $1\leq p<q<2$ and $M(t)=t^q, t\ge 0$, the theorem above complements the
above-mentioned classical Kadec result \cite{Kad}, by establishing the uniqueness of the distribution of a mean zero 
random variable $f$ whose
independent copies span $l_q$ in  $L_p$.

It is worth noting that the assertion of Theorem \ref{main theorem} is in a sense sharp. Namely, in Proposition \ref{final prop}
we show that there exist two random variables $x$ and $y$ with non-equivalent distribution for large $\tau$ whose independent copies span in $L_1$ the same
Orlicz space $l_M$, where $M$ is equivalent to the function ${t}/{\log(e/t)}$ for small $t>0$.

Note that in the special case $p=1$, another attempt to describe the connection between the distribution of a random variable $f\in L_p$ and
the corresponding Orlicz function $M$ can be found in \cite{Sch}. However, the methods used in \cite{Sch} have a strong combinatorial flavor and
formulas obtained there seem to be less accessible.  Moreover, in \cite{Sch} the question of uniqueness of distribution of $f$ is not raised at all.

The proof of Theorem \ref{main theorem} is presented in Section \ref{main section}. Two important components of the proof are
Proposition \ref{prelim lemma} and Theorem \ref{preddverie}, which are given in Sections \ref{preliminaries} and \ref{f=f_0}, respectively.

We propose the following conjecture.

\begin{conj} Let $1\le p<2$ and let $M$ be a $p-$convex and $2-$concave Orlicz function. If there is a unique (up to
equivalence near $0$) mean zero function $f$ whose independent copies are equivalent in $L_p$
to the unit vector basis in $l_M,$ then $M$ is $(p+\varepsilon)-$convex and $(2-\varepsilon)-$concave for some $\varepsilon>0.$
\end{conj}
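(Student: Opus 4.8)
\smallskip
My plan is to establish the cycle (i) $\Rightarrow$ (ii) $\Rightarrow$ (iii) $\Rightarrow$ (i). Two computations drive everything. The first is an elasticity identity: writing $a(u)=uM'(u)/M(u)$ for a smooth representative of $M$ and $S_N:=-2pN+(p+1)uN'-u^2N''$ for the second-order operator applied to an Orlicz function $N$ (so that the paper's $S$ is $S_M$), a direct differentiation gives
\[
S_M(u)=M(u)\bigl[(a(u)-p)(2-a(u))-u\,a'(u)\bigr].
\]
Since $p$-convexity and $2$-concavity mean $p\le a(u)\le 2$, condition (i) is the assertion $p+\varepsilon\le a(u)\le 2-\varepsilon$; I would record (the role I assign to Proposition \ref{prelim lemma}) that, after passing to an equivalent Orlicz function on which the parasitic term $u\,a'(u)$ is controlled, condition (i) is \emph{equivalent} to $S_M(u)\asymp M(u)$ for small $u$, and that then $S_M$ has the positivity, monotonicity and boundedness needed to invoke \cite{ASorlicz}. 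The second computation is a reconstruction identity: for mean-zero $f$ with canonical (smooth) Dacunha--Castelle function $M_f$, whose copies span $l_{M_f}$ in $L_p$ (cf. \cite{B3}), the operator $S$ \emph{inverts} the Dacunha--Castelle transform,
\[
S_{M_f}(u)\asymp n_f(1/u),\qquad\text{equivalently}\qquad n_f(\tau)\asymp S_{M_f}(1/\tau).
\]
This is where Theorem \ref{preddverie} enters: it supplies the sharp two-sided estimate for $\|\sum_k a_kf_k\|_p$ from which $M_f(u)$ is written as the sum of a central ($u^2$-weighted) and a tail ($u^p$-weighted) integral of $n_f$; differentiating, the operator $S$ annihilates both antiderivative parts and leaves exactly $n_f(1/u)$, up to the constant $2-p$.

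For (i) $\Rightarrow$ (ii), let $f$ be any mean-zero member of $L_p$ whose independent copies span $l_M$; after a routine symmetrization I may take $f$ symmetric. Then $M_f\sim M$, and since the Matuszewska indices are invariant under equivalence, $M_f$ again satisfies (i). Proposition \ref{prelim lemma} yields $S_{M_f}\asymp M_f\asymp M$, and the reconstruction identity then gives $n_f(\tau)\asymp M(1/\tau)$ for large $\tau$. Finally, the decreasing function $\mathfrak m(t)=1/M^{-1}(t)$ has $n_{\mathfrak m}(\tau)=M(1/\tau)$ for large $\tau$, so this is precisely the desired equivalence $n_f\sim n_{\mathfrak m}$.

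For (ii) $\Rightarrow$ (iii), I would first invoke the Bretagnolle--Dacunha-Castelle embedding \cite{BD2} to produce \emph{some} mean-zero $f_0\in L_p$ whose copies span $l_M$; by (ii), $n_{f_0}(\tau)\sim M(1/\tau)=n_{\mathfrak m}(\tau)$ for large $\tau$. Because the integrals representing $M_f(u)$ concentrate, as $u\to 0^+$, on the range $\tau\gtrsim 1/u$, equivalence of the large-$\tau$ tails forces $M_{\mathfrak m}\sim M_{f_0}\sim M$; hence $\mathfrak m\in L_p$ and its copies span $l_M$, which is (iii). For (iii) $\Rightarrow$ (i), suppose $\mathfrak m\in L_p$ and its copies span $l_M$, so $M_{\mathfrak m}\sim M$. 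Applying the reconstruction identity to $f=\mathfrak m$ and using $n_{\mathfrak m}(1/u)=M(u)$ gives $S_{M_{\mathfrak m}}(u)\asymp M(u)\asymp M_{\mathfrak m}(u)$; by the converse half of Proposition \ref{prelim lemma} this forces $M_{\mathfrak m}$, and hence $M$, to satisfy (i). Contrapositively, when (i) fails the elasticity identity shows $S_M$ degenerates and the map $n_f\mapsto M_f$ loses injectivity, which is exactly the mechanism realized by the two non-equivalent spanning variables of Proposition \ref{final prop}.

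The step I expect to be hardest is the reconstruction identity underlying Theorem \ref{preddverie}: it rests on a genuinely sharp two-sided estimate for the $L_p$-norm of a sum of independent copies, in which the tail (Poissonian) and central (sub-Gaussian) regimes must be separated cleanly enough that the second-order operator $S$ recovers $n_f$ without error terms swamping the leading behaviour. A secondary difficulty is the regularization in Proposition \ref{prelim lemma}: the term $u\,a'(u)$ need not be small for an arbitrary representative, so one must first replace $M$ by an equivalent, suitably smooth Orlicz function on which the indices are attained stably---only then is condition (i) faithfully encoded by the relation $S_M\asymp M$ in both directions.
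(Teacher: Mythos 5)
You have proved the wrong statement. The item you were asked to prove is the paper's \emph{Conjecture}, which the authors explicitly leave open; the paper contains no proof of it, and your proposal does not supply one either. Your cycle ${\rm (i)}\Rightarrow{\rm (ii)}\Rightarrow{\rm (iii)}\Rightarrow{\rm (i)}$ is the structure of Theorem \ref{main theorem}, which is already proven in the paper. The conjecture is strictly stronger than the implication ${\rm (ii)}\Rightarrow{\rm (i)}$ of that theorem: its hypothesis is only that all mean zero $f$ whose independent copies span $l_M$ in $L_p$ are \emph{mutually} equivalent near $0$, not that their common distribution is that of $\mathfrak{m}$. Nowhere in your argument is this uniqueness hypothesis used; consequently nothing in your write-up addresses the actual content of the conjecture. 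To prove it one would have to show that for \emph{every} $p$-convex, $2$-concave $M$ failing condition (i) there exist two spanning variables with non-equivalent distributions. Your closing contrapositive remark --- that when (i) fails ``$S_M$ degenerates and the map $n_f\mapsto M_f$ loses injectivity, which is exactly the mechanism realized by Proposition \ref{final prop}'' --- is precisely the missing step asserted without proof: Proposition \ref{final prop} treats one specific $M$ (equivalent to $t/\log(e/t)$ for small $t$), and no argument is given that the construction there can be carried out for an arbitrary $M$ with Matuszewska index touching $p$ or $2$. That is the open problem, and your proposal leaves it open.

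Secondary problems: your claimed equivalence between condition (i) and $S_M(u)\asymp M(u)$ after regularization, and the ``reconstruction identity'' $n_f(\tau)\asymp S_{M_f}(1/\tau)$, are taken from the setting of \cite{ASorlicz}, where they require technical hypotheses on $M$ (positivity, monotonicity and boundedness of $S$) that are assumptions there, not consequences of (i); you acknowledge the term $u\,a'(u)$ is uncontrolled but do not show the regularization exists. Moreover, the roles you assign to the paper's results are inaccurate: Proposition \ref{prelim lemma} is the Rosenthal-type criterion \eqref{main f cond} expressing $1/M^{-1}$ through Ces\`aro-type averages of $f^*$ (no differential operator appears), and Theorem \ref{preddverie} characterizes when \eqref{main f cond} holds for $f=\mathfrak{m}$ via Propositions \ref{7and9} and \ref{8and10}; neither supplies the second-order inversion you attribute to them. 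Even read charitably as a proof of Theorem \ref{main theorem}, your route through $S$ is heuristic where the paper's proof is elementary (submultiplicativity estimates for $\varphi(t)=t\,\mathfrak{m}^p(t)$ and $\psi(t)=t\,\mathfrak{m}^2(t)$ plus the splitting of the integrals at $Nt$ and $t/N$ in the proof of ${\rm (i)}\Rightarrow{\rm (ii)}$), with no differentiation of $M$ required.
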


\section{Preliminaries and auxiliary results}\label{preliminaries}

\subsection{Orlicz functions and spaces}
For the theory of Orlicz spaces we refer to \cite{KR61,LT2}.

Let $M$ be an Orlicz function, that is, an increasing convex function on $[0,\infty)$ such
that $M(0)=0.$ To any Orlicz function $M$ we associate the Orlicz sequence space $l_M$ of all sequences of scalars $a=(a_n)_{n=1}^\infty$ such
that
$$\sum_{n=1}^\infty M\left(\frac{|a_n|}{\rho}\right)<\infty$$
for some $\rho>0.$ When equipped with the norm
$$\|a\|_{l_M}:=\inf\left\{\rho>0:\ \sum_{n=1}^\infty M\left(\frac{|a_n|}{\rho}\right)\leq 1\right\},$$
$l_M$ is a Banach space. Clearly, if $M(t)=t^p,$ $p\ge 1,$ then the Orlicz space $l_M$ is the familiar space $l_p.$
Moreover, the sequence $\{e_n\}_{n=1}^\infty$ given by
$$
e_n=(\underbrace{0,\cdots,0,}_{\mbox{$n-1$ times}}1,0,\cdots)$$
is a Schauder basis in every Orlicz space $l_M$ provided that $M$ satisfies
the $\Delta_2-$condition at zero, i.e., there are $u_0>0$ and $C>0$ such that $M(2u)\le CM(u)$ for
all $0<u<u_0.$
% (see e.g. \cite{LT2}).}
%We set for brevity
%$$
%\chi_n:=\sum_{k=1}^n e_k,\quad n\ge 1.
%$$

Similarly, if $M$ is an Orlicz function, then the Orlicz function space $L_M=L_M[0,1]$ consists of
all measurable functions $x$ on $[0,1]$ such that the norm
$$\|x\|_{L_M}=\inf\Big\{u >0:\;\int\limits_{0}^{1}
M(|x(t)|/u)\,dt\leq 1\Big\}$$
is finite.

% Similarly, one can introduce an  Orlicz function space $L_M$ on $(0,\infty)$ by setting
% $$L_M:=\Big\{x\in (L_1+L_{\infty})(0,\infty):\ M\left(\frac{|x|}{\rho}\right)\in L_1(0,\infty)\mbox{ for some }\rho>0\Big\}$$
% and equipping it with a Banach norm
% $$\|x\|_{L_M}:=\inf\Big\{\rho>0:\ \left\|M\left(\frac{|x|}{\rho}\right)\right\|_1\leq 1\Big\}.$$

%Let $f,g\in L_1(0,1)$. We write $f\sim g$ (and say that $f$ and $g$ are equivalent near $0$) if there exists an absolute constant $C$ such that
%$$C^{-1}f(t)\leq g(t)\leq Cf(t),\quad \forall t {\rm\ from \ some \ neighbourhood \ of}\ 0.$$

Let $1\leq p<q<\infty$. Given an Orlicz function $M$, we say that $M$ is $p$-convex if the map $t \mapsto M(t^{1/p})$ is convex,
and is $q$-concave if the map $t \mapsto M(t^{1/q})$ is concave. Throughout this paper, we  assume that $M(1)=1$ and that
$M:[0,\infty)\to [0,\infty)$ is a bijection.

Careful inspection of the proof of  \cite[Lemma 5]{ASorlicz} establishes the following two lemmas.

\begin{lem}\label{Lemma 20-version 1}
Let $1\leq p<\infty$. An Orlicz function $M:[0,\infty) \to [0,\infty)$ satisfying $\Delta_2$-condition at $0$ is equivalent to a $p$-convex Orlicz function on the
segment $[0,1]$ if and only if there exists a constant $C>0$ such that for all $0<s<1$ and all $0<t\leq 1$ we have
\begin{equation*}\label{p-convex Orlicz functions}
M(st)\le Cs^pM(t).
\end{equation*}
\end{lem}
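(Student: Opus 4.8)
The plan is to reduce the two-sided statement to a single monotonicity property and then invoke the classical relationship between almost-monotone functions and convex ones. First I would observe that, after the substitution $v=st$, the claimed inequality $M(st)\le Cs^pM(t)$ is equivalent to the assertion that the function $g(t):=M(t)/t^p$ is \emph{almost increasing} on $(0,1]$, i.e. $g(v)\le Cg(t)$ whenever $0<v\le t\le 1$. Likewise, setting $N(u):=M(u^{1/p})$, I would note that $g(t)=N(t^p)/t^p$, so that $g$ is almost increasing precisely when $u\mapsto N(u)/u$ is almost increasing on $(0,1]$. Since $M_1$ is $p$-convex exactly when $N_1(u):=M_1(u^{1/p})$ is convex, while $M\sim M_1$ translates (via the bijection $u\mapsto u^{1/p}$) into $N\sim N_1$, the whole lemma reduces to the statement that \emph{$N$ is equivalent to a convex function on $[0,1]$ if and only if $u\mapsto N(u)/u$ is almost increasing there.}

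For the direction assuming $M$ (hence $N$) equivalent to a convex function, I would use that any convex $N_1$ with $N_1(0)=0$ has $N_1(u)/u$ genuinely increasing, a standard property of convex functions vanishing at the origin. Transferring this through the equivalence $N\sim N_1$ then yields that $N(u)/u$ is almost increasing, the equivalence constants and any rescaling of the argument being absorbed using the $\Delta_2$-condition. Reading this back through the substitutions produces the inequality $M(st)\le Cs^pM(t)$.

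For the converse I would construct an explicit $p$-convex majorant. Starting from the almost increasing function $\phi(u):=N(u)/u$, I would set $\psi(u):=\sup_{0<v\le u}\phi(v)$, which is genuinely increasing and satisfies $\phi\le\psi\le C\phi$, and then define
$$N_1(u):=\int_0^u\psi(v)\,dv,\qquad M_1(t):=N_1(t^p).$$
Being the integral of an increasing function, $N_1$ is convex with $N_1(0)=0$, so $M_1$ is $p$-convex; since $p\ge 1$ and $t\mapsto t^p$ is convex and increasing, $M_1$ is moreover genuinely convex and thus a bona fide Orlicz function. The upper bound $N_1(u)\le u\psi(u)\sim N(u)$ is immediate, and for the matching lower bound I would estimate $N_1(u)\ge\int_{u/2}^u\psi(v)\,dv\ge\frac{u}{2}\psi(u/2)\sim N(u/2)$ and then invoke the $\Delta_2$-condition to replace $N(u/2)$ by a constant multiple of $N(u)$; this yields $M_1\sim M$ on $[0,1]$, completing the argument.

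I expect the main obstacle to be bookkeeping rather than conceptual: namely, keeping the equivalence constants, the $\Delta_2$ doublings, and the rescalings of the argument under control simultaneously, together with handling the transition near $t=1$, where the range restriction forces a separate (routine) adjustment of the constant. The one genuinely load-bearing ingredient is the $\Delta_2$-condition, which is exactly what permits the argument-rescalings introduced by the equivalence and by the smoothing $\psi$ to be converted into harmless multiplicative constants.
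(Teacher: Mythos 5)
Your proof is correct. Note that the paper does not actually prove this lemma --- it is stated as following from ``careful inspection of the proof of \cite[Lemma 5]{ASorlicz}'' --- and your argument is exactly the standard route such a proof takes: rewriting $M(st)\le Cs^pM(t)$ as almost-monotonicity of $t\mapsto M(t)/t^p$, equivalently of $u\mapsto N(u)/u$ with $N(u)=M(u^{1/p})$, and in the converse direction smoothing via $N_1(u)=\int_0^u\psi(v)\,dv$ with $\psi$ the increasing majorant of $N(v)/v$. You also correctly isolate where the $\Delta_2$-condition is genuinely load-bearing: under the paper's pointwise-multiplicative notion of equivalence it is precisely what upgrades the lower bound $N_1(u)\ge N(u/2)$ to $N_1(u)\ge c\,N(u)$ (the transfer of $\Delta_2$ from $M$ to $N$ being immediate since $2^{1/p}\le 2$ and $M$ is increasing), while the forward direction needs no $\Delta_2$ at all, only the fact that a convex function vanishing at $0$ has $N_1(u)/u$ nondecreasing.
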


\begin{lem}\label{Lemma 20-version 2} Let $1< q<\infty$. An Orlicz function $M:[0,\infty) \to [0,\infty)$  is equivalent to a $q$-concave Orlicz function on the
segment $[0,1]$ if and only if there exists a constant $C>0$ such that for all $0<s<1$ and all $0<t\leq 1$ we have
\begin{equation*}\label{p-concave Orlicz functions}
C^{-1}s^qM(t)\le M(st).
\end{equation*}
\end{lem}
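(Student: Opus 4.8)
The plan is to reduce the statement to a standard characterization of concavity via the substitution $\phi(u):=M(u^{1/q})$, exactly mirroring the $p$-convex case of Lemma \ref{Lemma 20-version 1}. Under this substitution one has $M(t)=\phi(t^q)$ and $M(st)=\phi(s^qt^q)$, so, writing $u=t^q\in(0,1]$ and $r=s^q\in(0,1)$, the asserted inequality $C^{-1}s^qM(t)\le M(st)$ becomes
$$\phi(ru)\ge C^{-1}r\,\phi(u),\qquad 0<r<1,\ 0<u\le 1,$$
which is precisely the assertion that the ratio $u\mapsto \phi(u)/u$ is \emph{almost decreasing} on $(0,1]$ with constant $C$. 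Thus the whole lemma amounts to the equivalence: $M$ is equivalent to a $q$-concave function on $[0,1]$ if and only if $\phi(u)/u$ is almost decreasing there.

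For the direction $(\Rightarrow)$, I would start from an Orlicz function $N$ that is genuinely $q$-concave and equivalent to $M$ on $[0,1]$. Then $\tilde\phi(u):=N(u^{1/q})$ is concave with $\tilde\phi(0)=0$, so for $0<r<1$ concavity together with $\tilde\phi(0)\ge 0$ gives $\tilde\phi(ru)\ge r\,\tilde\phi(u)$; translating back yields $s^qN(t)\le N(st)$ for all $t>0$ and $0<s<1$ with constant $1$. Since $0<st\le t\le 1$, all arguments remain in $[0,1]$, and the equivalence of $M$ and $N$ lets me replace $N$ by $M$ on both sides at the cost of a multiplicative constant, producing the required $C$.

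For the converse $(\Leftarrow)$, I would use the displayed almost-monotonicity of $\phi(u)/u$ together with the obvious fact that $\phi$ is increasing, being a composition of increasing maps. An increasing function whose ratio $\phi(u)/u$ is almost decreasing is equivalent on $[0,1]$ to a quasiconcave function (one may set, e.g., $\psi(u)=u\sup_{u\le w\le 1}\phi(w)/w$ and check $\phi\le\psi\le C\phi$ with $\psi(u)/u$ nonincreasing), and hence, by the classical fact that the least concave majorant $\hat\phi$ of a quasiconcave function satisfies $\phi\le\hat\phi\le 2\phi$, it is equivalent to a concave function $\tilde\phi$. Setting $N(t):=\tilde\phi(t^q)$ then produces a $q$-concave Orlicz function equivalent to $M$ on $[0,1]$.

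The main obstacle is purely bookkeeping: controlling the interplay between the constants arising in the equivalence $M\sim N$ and those in the almost-monotonicity of the ratio, and ensuring that every rescaled argument stays inside $[0,1]$, where the hypotheses are available. Once the substitution $\phi(u)=M(u^{1/q})$ is in place the argument is routine and follows the inspection of \cite[Lemma 5]{ASorlicz} referenced above, with the concavity and monotonicity roles of the exponent reversed relative to Lemma \ref{Lemma 20-version 1}.
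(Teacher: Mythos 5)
Your overall route --- substituting $\phi(u)=M(u^{1/q})$, reading the inequality as almost-monotonicity of $\phi(u)/u$, correcting to the quasiconcave function $\psi(u)=u\sup_{u\le w\le 1}\phi(w)/w$, and passing to the least concave majorant --- is the natural one, and it is essentially the argument the paper points to (the paper gives no written proof, deferring to ``careful inspection of the proof of \cite[Lemma 5]{ASorlicz}''). The forward direction is correct as you state it, and your quasiconcavity step does work: $\phi\le\psi\le C\phi$ follows from the almost-decreasing ratio, $\psi(u)/u$ is nonincreasing by construction, and $\psi$ is indeed nondecreasing (for $u_1<u_2$ split the supremum at $u_2$ and use that $\phi$ is increasing), so the factor-$2$ least concave majorant fact (cf.\ \cite[Theorem II.1.1]{KPS}) applies.

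However, your last sentence contains a genuine gap: setting $N(t):=\tilde\phi(t^q)$ with $\tilde\phi$ concave produces a $q$-concave function, but \emph{not necessarily an Orlicz function} --- convexity can fail. For example, if $\tilde\phi(u)=\min\{2u,\,u+\delta\}$ on an interval, then $N(t)=2t^q$ for $t^q\le\delta$ and $N(t)=t^q+\delta$ beyond, so the derivative drops from $2qt^{q-1}$ to $qt^{q-1}$ at $t=\delta^{1/q}$ and $N$ is not convex there. This is precisely where the claimed symmetry with Lemma \ref{Lemma 20-version 1} breaks down: for $p\ge 1$, $p$-convexity of $N$ gives convexity for free ($N$ is a convex increasing function of the convex function $t^p$), whereas $q$-concavity gives no convexity at all, so ``the roles reversed'' hides an extra step. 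The gap is repairable: for instance, replace $N$ by its greatest convex minorant $N_c$ on $[0,1]$ --- since $cM\le N\le CM$ with $M$ convex, $cM$ is a convex minorant of $N$, whence $cM\le N_c\le N$ and $N_c\sim M$ --- and then verify that $N_c$ is still $q$-concave (on contact intervals it coincides with $N$; on the complementary intervals it is affine, and $u\mapsto a+bu^{1/q}$ with $b\ge 0$ is concave for any $a$; one must still check the fit of one-sided derivatives at the junctions). Alternatively one can invoke the standard characterization via Matuszewska--Orlicz type conditions, under which the displayed inequality yields an equivalent convex \emph{and} $q$-concave function. As written, your proof establishes only equivalence of $M$ to a $q$-concave function on $[0,1]$, not to a $q$-concave Orlicz function as the lemma asserts.
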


%{\color{red} In what follows we will often to refer to the following result which is contained  in \cite[Theorem 9]{ASorlicz}.
%
%\begin{thm}\label{as theorem} Let $1\leq p<2$ and let $M$ be an Orlicz function on $(0,\infty).$ The following conditions are equivalent:
%\begin{enumerate}[{\rm (a)}]
%\item $M$ is $p-$convex, $2-$concave.
%\item $l_M$ is isomorphic to a subspace of $L_p(0,1).$
%\end{enumerate}
%Furthermore, if $M(t)\not\sim t^p$ as $t\to0,$ then there exists a mean zero {\color{blue} random variables}. $f\in L_p(0,1)$ such that the unit vector basis
%$\{e_k\}_{k=1}^{\infty}$ in $l_M$ is equivalent to a sequence $\{f_k\}_{k=1}^{\infty}$ of  independent copies of $f.$
%\end{thm}
%}
%The following result is contained  in \cite[Theorem 9]{ASorlicz}.
%
%\begin{thm}\label{as theorem} Let $1\leq p<2$ and let $M$ be an Orlicz function on $(0,\infty).$ The following conditions are equivalent.
%\begin{enumerate}[{\rm (a)}]
%\item $M$ is $p-$convex, $2-$concave.
%\item $l_M$ is isomorphic to a subspace of $L_p(0,1).$
%\end{enumerate}
%\end{thm}
%
In what follows, by $f^*$ we will denote the non-increasing right-continuous rearrangement of a random variable $f$, that is,
$$f^*(s):=\inf\{t: n_f(t)\leq s\},$$
where $n_f$ is the distribution function of the random variable $f$. One says that random variables $f$ and $g$ are equimeasurable if
$f^*(t)=g^*(t),$ $0<t\le 1$ (equivalently, $n_f(\tau)=n_g(\tau)$, $\tau >0$).
Finally, given two positive functions (quasinorms) $f$ and
$g$ are said to be  equivalent (we write $f\sim g$) if there
exists a positive finite constant $C$ such that $C^{-1}f\leq g\leq
Cf$. Sometimes, we say that these functions are equivalent for
large (or small) values of the argument, meaning that the
preceding inequalities hold only for its specified values.

%we recall the well known formula for the rearrangement
%$$f^*(s):=\inf\{t: n_f(t)\leq s\},$$
%where $n_f$ is the distribution function of the {\color{blue} random variables} $f$.}

%Furthermore, if $M$ is as in Theorem \ref{as theorem} and $M(t)\not\sim t^p$ as $t\to0,$ then it follows from
%\cite[Theorem 9]{ASorlicz} that there exists a mean zero {\color{blue} random variables} $f\in L_p(0,1)$ such that the unit vector basis
%$\{e_k\}_{k=1}^{\infty}$ in $l_M$ is equivalent to a sequence $\{f_k\}_{k=1}^{\infty}$ of  independent copies of $f\in L_p(0,1).$

%Throughout this paper, we  assume that $M(1)=1$ and that $M:[0,\infty)\to [0,\infty)$ is a bijection.

\subsection{A condition for independent copies of a mean zero $f$ to be equivalent in $L_p$ to the unit vector basis
of $l_M$}
For a fixed $f\in L_1(0,1),$ every $k\in{\mathbb N},$ and $t>0$ we set
$$\overline{f}_k(t):=\left\{\begin{aligned}
f(t-k+1),t\in[k-1,k),\\
0, \mbox{ otherwise}.\\\end{aligned}\right.$$

The following assertion is an immediate consequence of the famous Rosenthal inequality \cite{R}
(or, its more general version due to Johnson and Schechtman \cite{JS}). It establishes a connection between
the behaviour in $L_p$ of an arbitrary sequence $\{f_k\}_{k=1}^{\infty}$ of independent copies of a
mean zero random variable $f\in L_p$ and
that of corresponding sequence $\{\overline{f}_k\}_{k=1}^{\infty}$
in the  Banach sum $(L_p+L_2)(0,\infty)$ of the Lebesgue spaces $L_p(0,\infty)$ and $L_2(0,\infty)$.

\begin{lem}\label{our js lemma} Let $1\leq p\leq 2$. For every finitely supported $a=(a_k)_{k=1}^\infty$
and for a mean zero random variable $f\in L_p(0,1)$ we have
$$\Big\|\sum_{k=1}^{\infty}a_kf_k\Big\|_p\sim\Big\|\sum_{k=1}^\infty a_k\overline{f}_k\Big\|_{L_p+L_2}.$$
%Here, $(L_p+L_2)(0,\infty)$ is the  Banach sum of Lebesgue spaces $L_p(0,\infty)$ and $L_2(0,\infty)$ and
%$\{f_k\}_{k=1}^{\infty}$ is an arbitrary sequence of independent copies of the {\color{blue} random variables} $f$.
\end{lem}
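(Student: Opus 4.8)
The plan is to deduce the stated equivalence directly from the two-sided Rosenthal inequality in the form valid for $1\le p\le 2$. Set $g_k:=a_kf_k$. Since $f$ has mean zero and the $f_k$ are independent copies of $f$, the random variables $\{g_k\}_{k=1}^\infty$ are independent, mean zero, and each lies in $L_p(0,1)$. For such a family the Rosenthal inequality (respectively its extension by Johnson and Schechtman) asserts that
$$
\Big\|\sum_k g_k\Big\|_p\sim\inf\Big\{\Big(\sum_k\|g_k'\|_p^p\Big)^{1/p}+\Big(\sum_k\|g_k''\|_2^2\Big)^{1/2}\Big\},
$$
where the infimum is taken over all decompositions $g_k=g_k'+g_k''$ and the constants depend only on $p$. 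Since the only hypotheses used are independence and the vanishing of the means, this applies verbatim to the sequence $\{a_kf_k\}$.

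The second step is to recognise the right-hand side above as the $L_p+L_2$ norm of the disjoint sum $\sum_k a_k\overline{f}_k$. Because the functions $\overline{f}_k$ have pairwise disjoint supports $[k-1,k)$, any decomposition $\sum_k a_k\overline{f}_k=h_1+h_2$ with $h_1\in L_p(0,\infty)$ and $h_2\in L_2(0,\infty)$ may be replaced, without increasing either norm, by its restriction to $E:=\bigcup_k[k-1,k)$; on $E$ it then splits as a family of decompositions $a_k\overline{f}_k=h_1^{(k)}+h_2^{(k)}$ on the individual intervals. Disjointness gives $\|h_1\|_{L_p}^p=\sum_k\|h_1^{(k)}\|_p^p$ and $\|h_2\|_{L_2}^2=\sum_k\|h_2^{(k)}\|_2^2$, and after translating each interval back to $(0,1)$ the resulting infimum over all such decompositions coincides exactly with the Rosenthal functional above. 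Combining the two steps yields the claimed equivalence with constants independent of $a$ and of the length of its support.

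The only point requiring care, and the step I expect to be the main (if minor) obstacle, is the verification that in computing $\big\|\sum_k a_k\overline{f}_k\big\|_{L_p+L_2}$ one may restrict attention to decompositions adapted to the intervals $[k-1,k)$. This follows from disjointness of supports together with the additivity $\|h\|_{L_r}^r=\sum_k\|h\chi_{[k-1,k)}\|_{L_r}^r$ of $L_r$-norms over disjointly supported pieces, so that cutting an arbitrary decomposition down to $E$ incurs no loss in the infimum. It is worth stressing that when $p<2$ the function $f$ need not belong to $L_2$; it is therefore essential that the splitting be performed at the level of the $\overline{f}_k$ themselves, separating their large and bounded parts, rather than by forming naive $\ell_p$- and $\ell_2$-sums of $\|f\|_p$ and $\|f\|_2$. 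The $L_p+L_2$ formulation encodes precisely this freedom, which is exactly what makes the equivalence an immediate reformulation of Rosenthal's inequality.
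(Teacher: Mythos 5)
Your proposal is correct and follows essentially the route the paper intends: the paper states the lemma without proof as an immediate consequence of Rosenthal's inequality (in its dual form for $1\le p\le 2$, or the Johnson--Schechtman extension), and your two steps --- the infimum form of Rosenthal for independent mean-zero summands, plus the disjointness bookkeeping identifying that infimum with the $L_p+L_2$ norm of $\sum_k a_k\overline{f}_k$ --- simply make that deduction explicit. The only point you gloss over is that decompositions of $a_k\overline{f}_k$ live on $[k-1,k)$ while those of $a_kf_k$ live on the probability space, so one should note (e.g.\ via Holmstedt-type cut decompositions, which are functions of the variable itself) that the infimum depends only on the distribution; this is standard and does not affect correctness.
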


Lemma \ref{our js lemma} allows us to investigate sequences of independent identically distributed mean zero random variables
in $L_p=L_p(0,1)$.
%We  use Lemma \ref{our js lemma} to investigate sequences of independent identically distributed mean zero random variables equivalent to the unit vector basis in an
%Orlicz space $l_M$.

\begin{prop}\label{prelim lemma} Let $1\leq p\leq 2$ and let $f\in L_p$ be a mean zero random variable. Then, a sequence $\{f_k\}_{k=1}^{\infty}$ of independent copies of
the random variable $f$ is equivalent (in $L_p$) to the unit vector basis in $l_M$ if and only if
\begin{equation}\label{main f cond}
\frac1{M^{-1}(t)}\sim \left(\frac1t\int_0^tf^*(s)^p\,ds\right)^{1/p}+\left(\frac1t\int_t^1f^*(s)^2\,ds\right)^{1/2},\quad 0<t\le 1.
\end{equation}
\end{prop}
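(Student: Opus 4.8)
The plan is to reduce everything to the behaviour of $\{\overline{f}_k\}$ in $L_p+L_2(0,\infty)$ via Lemma~\ref{our js lemma}, and then to compute the norm in that Banach sum explicitly for the vectors that arise. By Lemma~\ref{our js lemma}, the sequence $\{f_k\}$ is equivalent in $L_p$ to the unit vector basis of $l_M$ precisely when
$$
\Big\|\sum_{k=1}^\infty a_k\overline{f}_k\Big\|_{L_p+L_2}\sim\Big\|\sum_{k=1}^\infty a_ke_k\Big\|_{l_M}
$$
uniformly over all finitely supported scalar sequences $(a_k)$. Because the functions $\overline{f}_k$ have disjoint supports (on consecutive unit intervals) and each is equimeasurable with $f$, the function $\sum_k a_k\overline{f}_k$ is a disjoint sum of rescaled copies of $f$; I would first record the standard fact that the $L_p+L_2$-norm of a disjointly supported sum depends only on the decreasing rearrangement, and that for a disjoint sum one has the formula $\|g\|_{L_p+L_2}=\int_0^1 g^*+\big(\int_1^\infty (g^*)^2\big)^{1/2}$ (or an equivalent description via the fundamental splitting of the sum at the level $t=1$). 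The crux of the argument is to turn this into a pointwise estimate on the $l_M$ side.

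Next I would test the equivalence on the canonical building blocks: for a single coordinate vector $a=t^{1/2}e_1$ with $0<t\le 1$ one checks directly that $\|t^{1/2}e_1\|_{l_M}\sim 1/M^{-1}(1/t)$ after the usual change of variable, so the content of the equivalence is that the right-hand side of \eqref{main f cond} governs the $L_p+L_2$-norm of the corresponding single dilated copy of $f$. To make this precise I would compute $\|a\overline{f}_1\|_{L_p+L_2}$ for the scalar $a$ chosen so that the rearrangement crosses the threshold at measure $t$; the two terms in \eqref{main f cond}, namely the truncated $L_p$-average $\big(\tfrac1t\int_0^t (f^*)^p\big)^{1/p}$ over the small-$s$ (large-value) part and the truncated $L_2$-average $\big(\tfrac1t\int_t^1 (f^*)^2\big)^{1/2}$ over the large-$s$ (small-value) part, are exactly the two pieces obtained by splitting the $L_p+L_2$ norm of a dilate of $f$ at the level where the rearrangement has measure $t$. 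This is the computation that identifies $M^{-1}$ with the reciprocal of the averaged quantity.

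Having matched the single-block behaviour, the remaining work is to upgrade from diagonal vectors to general sequences $(a_k)$, i.e. to show that the pointwise equivalence \eqref{main f cond} is not merely necessary but sufficient for the full sequence equivalence. For sufficiency I would argue that the $L_p+L_2$-norm of $\sum_k a_k\overline{f}_k$ is comparable to the Orlicz sum $\inf\{\rho: \sum_k M(|a_k|/\rho)\le 1\}$ by estimating the rearrangement of the disjoint sum in terms of the rearrangements of the individual blocks and invoking the $p$-convexity and $2$-concavity of $M$ (these guarantee, via Lemmas~\ref{Lemma 20-version 1} and \ref{Lemma 20-version 2}, that $M$ behaves like $t^p$ at the relevant small scales and like $t^2$ at the relevant large scales, which is precisely what the two averaging terms encode). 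For necessity one runs the same estimate in reverse, testing on finitely many equal coefficients to recover the Orlicz norm from the sum structure.

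The main obstacle I anticipate is the passage from the single-coordinate calculation to the uniform two-sided estimate over all $(a_k)$: controlling the decreasing rearrangement of a \emph{sum} of dilated disjoint copies of $f$ and showing it is equivalent, after splitting at the unit level, to the Orlicz functional requires a careful interleaving argument and the quantitative convexity/concavity bounds, rather than just the defining equivalence \eqref{main f cond}. The clean bookkeeping of which scalars $a_k$ push mass of $\overline{f}_k$ into the $L_p$-part versus the $L_2$-part of $L_p+L_2(0,\infty)$, uniformly in $k$, is where the real effort lies; the single-block identification of $1/M^{-1}$ with the averaged expression is comparatively routine once the correct threshold substitution is set up.
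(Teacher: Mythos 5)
Your reduction via Lemma \ref{our js lemma} matches the paper's starting point, but both halves of your plan have genuine gaps. For the necessity direction, the single-coordinate test fails: since $M(1)=1$, one has $\|t^{1/2}e_1\|_{l_M}=t^{1/2}$, not $\sim 1/M^{-1}(1/t)$ --- a single coordinate vector carries no information about $M$ beyond normalization. Worse, $a\overline{f}_1$ is supported in a set of measure at most $1$, so $\|a\overline{f}_1\|_{L_p+L_2}\sim |a|\,\|f\|_p$, and the tail term $\left(\frac1t\int_t^1 f^*(s)^2\,ds\right)^{1/2}$ in \eqref{main f cond} can never arise from one scaled block: multiplying by a scalar rescales values, not supports, so there is no ``single dilated copy'' to compute with. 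The dilation you need comes from the \emph{number} of blocks: the paper tests the equivalence on $\sum_{k=1}^n e_k$, for which $\bigl(\sum_{k=1}^n\overline{f}_k\bigr)^*(s)=f^*(s/n)$ and $\bigl\|\sum_{k=1}^n e_k\bigr\|_{l_M}=1/M^{-1}(1/n)$; splitting the $L_p+L_2$ norm at $s=1$ and substituting yields \eqref{main f cond} at $t=1/n$, and the values $t\in(1/(n+1),1/n)$ are filled in by monotonicity. (Also, your norm formula for the Banach sum should read $\left(\int_0^1 (g^*)^p\right)^{1/p}+\left(\int_1^\infty (g^*)^2\right)^{1/2}$, valid up to equivalence because $p\le 2$; the version you wrote is the $p=1$ case.)

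For sufficiency, the uniform two-sided estimate over all $(a_k)$ that you defer as ``the real effort'' is precisely what your plan leaves unproven, and the tools you intend to use (Lemmas \ref{Lemma 20-version 1} and \ref{Lemma 20-version 2}) require $M$ to be $p$-convex and $2$-concave --- hypotheses the proposition does not make. The paper avoids the direct estimate entirely: by Dacunha-Castelle's theorem (\cite[Theorem 1, p.X.8]{B3}, see also \cite[Theorem~9]{ASorlicz}), the independent copies of \emph{any} mean zero $f\in L_p$ are automatically equivalent in $L_p$ to the unit vector basis of \emph{some} Orlicz space $l_N$; applying the already-proved necessity direction with $N$ in place of $M$ shows that $1/N^{-1}(t)$ is equivalent to the same averaged expression, so if \eqref{main f cond} holds for $M$, then $N$ and $M$ are equivalent on $[0,1]$ and $l_N=l_M$. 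Without either this citation or a completed direct interleaving argument, the ``if'' direction of the proposition is missing from your proposal.
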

\begin{proof} At first, we assume that a sequence $\{f_k\}_{k=1}^{\infty}$ of independent copies of
$f$ is equivalent in $L_p$ to the unit vector basis in $l_M$. Then, we have
$$\Big\|\sum_{k=1}^{n}e_k\Big\|_{l_M}\sim\Big\|\sum_{k=1}^{n}f_k\Big\|_p
\stackrel{Lemma\ \ref{our js lemma}}{\sim}\Big\|\sum_{k=1}^n \overline{f}_k\Big\|_{L_p+L_2}.$$
Since $1\leq p\leq 2$, it follows that
$$\|x\|_{L_p+L_2}\sim \left(\int_0^1x^*(s)^p\,ds\right)^{1/p}+\left(\int_1^{\infty}x^*(s)^2\,ds\right)^{1/2}.$$
Therefore, from the equalities
$$\Big(\sum_{k=1}^n\overline{f}_k\Big)^*(s)=f^*(\frac{s}{n}),\quad s>0,$$
and
$$\Big\|\sum_{k=1}^{n}e_k\Big\|_{l_M}=\inf\left\{\rho>0: nM(\frac1{\rho})\leq 1\right\}=\frac1{M^{-1}(1/n)},\quad n\geq1,$$
it follows that
$$\frac1{M^{-1}(1/n)}\sim\Big(\int_0^1(f^*(\frac{s}{n}))^p\,ds\Big)^{1/p}+\Big(\int_1^{n}(f^*(\frac{s}{n}))^2\,ds\Big)^{1/2}=$$
$$=\Big(n\int_0^{1/n}(f^*(s))^p\,ds\Big)^{1/p}+\Big(n\int_{1/n}^1(f^*(s))^2\,ds\Big)^{1/2},\quad n\geq 1.$$
Let $t\in(1/(n+1),1/n)$ for some $n\geq 1.$ We clearly have $M^{-1}(1/n)\sim M^{-1}(t)$ and
$$\Big(n\int_0^{1/n}(f^*(s))^p\,ds\Big)^{1/p}+\Big(n\int_{1/n}^1(f^*(s))^2\,ds\Big)^{1/2}\sim$$
$$\sim\Big(\frac1t\int_0^t(f^*(s))^p\,ds\Big)^{1/p}+\Big(\frac1t\int_t^1(f^*(s))^2\,ds\Big)^{1/2}.$$
The assertion \eqref{main f cond} follows immediately from the equivalences  above.

%Recall that
%$$\|\sum_{k=1}^{n}e_k\|_{l_M}=\|\chi_{n}\|_{l_M}=\inf\left\{\rho>0: nM(\frac1{\rho})\leq 1\right\}=\frac1{M^{-1}(1/n)},\quad n\geq1.$$
%On the other hand, since $1\leq p\leq 2$, it follows
%$$\|x\|_{L_p+L_2}\sim \left(\int_0^1x^*(s)^pds\right)^{1/p}+\left(\int_1^{\infty}x^*(s)^2ds\right)^{1/2}.$$
%Since
%$$(\sum_{k=1}^ne_k\overline{f}_k)^*(s)=f^*(\frac{s}{n}),\quad s>0,$$
%it follows that
%$$\frac1{M^{-1}(1/n)}\sim(\int_0^1(f^*(\frac{s}{n}))^pds)^{1/p}+(\int_1^{n}(f^*(\frac{s}{n}))^2ds)^{1/2}=$$
%$$=(n\int_0^{1/n}(f^*(s))^pds)^{1/p}+(n\int_{1/n}^1(f^*(s))^2ds)^{1/2},\quad n\geq 1.$$
%Let $t\in(1/(n+1),1/n)$ for some $n\geq 1.$ We clearly have $M^{-1}(1/n)\sim M^{-1}(t)$ and
%$$(n\int_0^{1/n}(f^*(s))^pds)^{1/p}+(n\int_{1/n}^1(f^*(s))^2ds)^{1/2}\sim$$
%$$\sim(\frac1t\int_0^t(f^*(s))^pds)^{1/p}+(\frac1t\int_t^1(f^*(s))^2ds)^{1/2}.$$
%The assertion \eqref{main f cond} follows immediately from the equivalences  above.

Conversely, by \cite[Theorem 1, p.X.8]{B3} (see also \cite[Theorem~9]{ASorlicz}), for every given
mean zero $f\in L_p(0,1)$ the sequence $\{f_k\}_{k=1}^\infty$ of
independent copies of $f$ is equivalent in $L_p$ to the unit vector basis in some Orlicz sequence space $l_N$. Arguing in the same way as in the first part of the proof, we conclude that
\begin{equation*}
\frac1{N^{-1}(t)}\sim \left(\frac1t\int_0^tf^*(s)^p\,ds\right)^{1/p}+\left(\frac1t\int_t^1f^*(s)^2\,ds\right)^{1/2},\quad t\in(0,1).
\end{equation*}
Taken together with \eqref{main f cond} the equivalence above yields that the Orlicz functions $M$ and $N$ are equivalent on the segment $[0,1]$ and thus, $l_N=l_M$. This completes the proof.
\end{proof}

\section{When does the equivalence \eqref{main f cond} hold for the function $f=\mathfrak{m}$?}\label{f=f_0}

The following proposition provides necessary and sufficient conditions for the function $\mathfrak{m}^p$  to be equivalent to its Cesaro transform.

\begin{proposition}\label{7and9} Let $1\leq p<\infty$ and let $M$ be a $p$-convex Orlicz function satisfying $\Delta_2$-condition at $0$.
The following conditions are equivalent:
\begin{enumerate} [{\rm (i)}]\label{head est up}
\item The function $M$ is equivalent on the segment $[0,1]$ to a $(p+\varepsilon)-$convex Orlicz function 
for some $\varepsilon>0$;
%\item\label{equivalence1}  For $f_0$ given by \eqref{f0 def}, we have
%$$f_0^p(t)\sim\frac1t\int_0^tf_0^p(s)ds,\quad t\in(0,1).$$
\item\label{head est down} $$\frac1t\int_0^t\mathfrak{m}^p(s)\,ds\leq {\rm const}\cdot \mathfrak{m}^p(t),\quad t\in(0,1).$$
\end{enumerate}
\end{proposition}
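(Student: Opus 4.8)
The plan is to translate both conditions into pointwise power-type bounds on $\mathfrak{m}=1/M^{-1}$ and then compare them. Applying Lemma~\ref{Lemma 20-version 1} with the exponent $p+\varepsilon$ in place of $p$, the requirement that $M$ be equivalent on $[0,1]$ to a $(p+\varepsilon)$-convex function becomes the existence of a constant $C>0$ with $M(st)\le Cs^{p+\varepsilon}M(t)$ for all $0<s<1$, $0<t\le 1$. Writing $\mu=M(t)$, $v=M(st)$ and using $M^{-1}=1/\mathfrak{m}$, a direct manipulation rewrites this inequality as
\[
\mathfrak{m}(v)\le C^{1/(p+\varepsilon)}\,(\mu/v)^{1/(p+\varepsilon)}\,\mathfrak{m}(\mu),\qquad 0<v\le\mu\le 1.
\]
Thus condition (i) is \emph{equivalent} to the statement that $t\mapsto t^{1/(p+\varepsilon)}\mathfrak{m}(t)$ is almost increasing on $(0,1]$ for some $\varepsilon>0$, and both implications will be proved in this reformulation.

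For (i)$\Rightarrow$(ii) I would substitute the displayed bound (applied at arguments $s\le t$) into the Cesaro average, obtaining
\[
\frac1t\int_0^t\mathfrak{m}^p(s)\,ds\le C^{p/(p+\varepsilon)}\mathfrak{m}^p(t)\,t^{p/(p+\varepsilon)-1}\int_0^t s^{-p/(p+\varepsilon)}\,ds.
\]
Since $p/(p+\varepsilon)<1$, the last integral converges and equals a constant multiple of $t^{1-p/(p+\varepsilon)}$, so the whole right-hand side reduces to $\mathrm{const}\cdot\mathfrak{m}^p(t)$, which is exactly (ii). The essential point is that the strict gain $\varepsilon>0$ is what renders the singular integral $\int_0^t s^{-p/(p+\varepsilon)}\,ds$ finite.

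The reverse implication (ii)$\Rightarrow$(i) is the harder one, since one must recover a pointwise estimate from an averaged one. I would set $g=\mathfrak{m}^p$ (a strictly decreasing, locally integrable function) and $\Phi(t)=\int_0^t g$. Condition (ii) reads $\Phi(t)\le Ktg(t)$, and computing the derivative of $\Phi(t)/t^{1/K}$ shows (using $\Phi'=g$ a.e.) that this quotient is non-decreasing, i.e. $\Phi(\mu)\ge(\mu/v)^{1/K}\Phi(v)$ for $v\le\mu$. Combining the elementary bound $\Phi(v)\ge vg(v)$ (monotonicity of $g$) with $\Phi(\mu)\le K\mu g(\mu)$ (condition (ii)) yields the sandwich
\[
K\mu g(\mu)\ge\Phi(\mu)\ge(\mu/v)^{1/K}\Phi(v)\ge(\mu/v)^{1/K}vg(v),
\]
whence $\mathfrak{m}^p(v)=g(v)\le K\,(\mu/v)^{\,1-1/K}\mathfrak{m}^p(\mu)$ for $0<v\le\mu\le 1$. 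Setting $1/(p+\varepsilon):=(1-1/K)/p$ gives $\varepsilon=p/(K-1)>0$, and by the reformulation of the first paragraph this is precisely condition (i).

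The main obstacle I anticipate is justifying the \emph{strict} gain $\varepsilon>0$ in the final step, which rests on $K>1$, equivalently $1/K<1$. This holds because $\mathfrak{m}$ is strictly decreasing (as $M^{-1}$ is a strictly increasing bijection), so $g$ is non-constant; were (ii) to hold with $K=1$, then $\frac1t\int_0^t g=g(t)$ for every $t$, forcing $g$ constant, a contradiction. A minor technical point is the legitimacy of integrating the a.e. derivative inequality for $\Phi(t)/t^{1/K}$, which is valid because $\Phi$ is absolutely continuous and bounded away from $0$ on compact subsets of $(0,1]$.
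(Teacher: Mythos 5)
Your proposal is correct, and while it shares the paper's overall skeleton --- both proofs funnel through Lemma~\ref{Lemma 20-version 1} and the power-type estimate $M(st)\le C s^{p+\varepsilon}M(t)$, which is indeed equivalent to almost-monotonicity of $t\mapsto t^{1/(p+\varepsilon)}\mathfrak{m}(t)$ --- the execution of both implications is genuinely different. For (i)$\Rightarrow$(ii) the paper works with $\varphi(t)=t\mathfrak{m}^p(t)$, derives the dilation bound $\sup_t\varphi(st)/\varphi(t)\le s^{\varepsilon/(p+\varepsilon)}$ from concavity of $(M^{-1})^{p+\varepsilon}$, and then invokes Lemma~II.1.4 of \cite{KPS}; your direct substitution of the pointwise bound $\mathfrak{m}(s)\lesssim (t/s)^{1/(p+\varepsilon)}\mathfrak{m}(t)$ into the Ces\`aro average, with the convergent integral $\int_0^t s^{-p/(p+\varepsilon)}\,ds$, is self-contained and avoids the external citation (it also cleanly handles the fact that (i) only asserts \emph{equivalence} to a $(p+\varepsilon)$-convex function, since you pass through Lemma~\ref{Lemma 20-version 1} rather than applying concavity of $(M^{-1})^{p+\varepsilon}$ to $M$ itself, as the paper's proof tacitly does). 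For the harder direction (ii)$\Rightarrow$(i) the paper uses a discrete scheme: monotonicity of $\varphi$, a proof by contradiction showing $\varphi(s_0t)\le a\varphi(t)$ for $s_0<e^{-2C}$, and geometric iteration over powers $s_0^n$; your continuous Gronwall-type argument --- observing that (ii) says precisely that $\bigl(\Phi(t)t^{-1/K}\bigr)'\ge 0$ a.e., hence $\Phi(t)t^{-1/K}$ is non-decreasing, then sandwiching with $vg(v)\le\Phi(v)$ and $\Phi(\mu)\le K\mu g(\mu)$ --- replaces the iteration in one stroke and yields the explicit gain $\varepsilon=p/(K-1)$, whereas the paper obtains $\varepsilon$ only implicitly from $a=s_0^{\varepsilon/(p+\varepsilon)}$. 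Your technical justifications (absolute continuity of $\Phi$, finiteness of $\int_0^t\mathfrak{m}^p$ being implicit in (ii)) are sound; the only superfluous point is the worry about $K>1$: since the constant in (ii) may always be enlarged, one can simply take $K\ge 2$ without loss of generality, so the argument ruling out $K=1$, though valid, is not needed.
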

\begin{proof} Let the function $\varphi$ be defined by setting $$\varphi(t)=t\mathfrak{m}^p(t), \quad t\in(0,1).$$

${\rm (i)\to (ii)}$. It suffices to show that
\begin{equation}\label{fi first}
\int_0^t\frac{\varphi(s)\,ds}{s}\leq {\rm const}\cdot \varphi(t),\quad t\in(0,1).
\end{equation}

It follows directly from the definitions that, for all $s\in(0,1),$
$$\sup_{0<t\leq 1}\frac{\varphi(st)}{\varphi(t)}=s\cdot\sup_{0<t\leq 1}\left(\frac{(M^{-1}(t))^{p+\varepsilon}}{(M^{-1}(st))^{p+\varepsilon}}\right)^{\frac{p}{p+\varepsilon}}.$$
Since $M$ is $(p+\varepsilon)-$convex, the mapping
$$t\to (M^{-1}(t))^{p+\varepsilon},\quad t\in (0,1],$$
is concave. In particular, we have
$$\frac{(M^{-1}(t))^{p+\varepsilon}}{(M^{-1}(st))^{p+\varepsilon}}\leq s^{-1},\quad 0<s,t\leq 1.$$
Therefore,
$$\sup_{t\in(0,1)}\frac{\varphi(st)}{\varphi(t)}\leq s^{\frac{\varepsilon}{p+\varepsilon}},\quad 0<s\leq 1.$$
Applying now Lemma II.1.4 from \cite{KPS}, we infer \eqref{fi first} and this completes the proof of implication ${\rm (i)\to (ii)}$.

${\rm (ii)\to (i)}$. Since $M$ is $p-$convex, it follows that
$$\frac{M(s)}{s^p}\leq\frac{M(t)}{t^p},\quad 0\leq s\leq t\leq 1.$$
Replacing $s$ with $M^{-1}(s)$ and $t$ with $M^{-1}(t),$ we infer that $\varphi$ is increasing.

By the assumption, we have
$$\int_0^t\frac{\varphi(s)\,ds}{s}\leq C\varphi(t),\quad t\in(0,1),$$
for some $C>0.$ Take $s_0<e^{-2C}.$ We claim that
\begin{equation}\label{h down eq}
\sup_{t\in(0,1)}\frac{\varphi(s_0t)}{\varphi(t)}<1.
\end{equation}
Indeed, suppose that supremum in \eqref{h down eq} equals $1.$ In particular, there exists $t\in(0,1)$ such that $\varphi(s_0t)>\varphi(t)/2.$ Since $\varphi$ is increasing and since $\log(s_0^{-1})>2C,$ it follows that
$$\int_0^t\frac{\varphi(s)\,ds}{s}\geq \int_{s_0t}^t\frac{\varphi(s)\,ds}{s}\geq\varphi(s_0t)\log(\frac{t}{s_0t})>C\varphi(t).$$
This contradiction proves the claim.

According to \eqref{h down eq}, we can fix $a\in(0,1)$ such that
\begin{equation}\label{s0 fi1}
\varphi(s_0t)\leq a\varphi(t),\quad t\in(0,1).
\end{equation}
Without loss of generality, we can assume $a>s_0^{\frac1{1+p}}.$ Hence, there exists $\varepsilon\in(0,1)$ such that $a=s_0^{\frac{\varepsilon}{p+\varepsilon}}.$

For an arbitrary $s\in (0,1]$ there exists $n\in \mathbb{N}$ such that $s\in(s_0^{n+1},s_0^n).$ Since $\varphi$ is increasing, it follows that
$$\varphi(st)\leq\varphi(s_0^nt)\stackrel{\eqref{s0 fi1}}{\leq}s_0^{\frac{n\varepsilon}{p+\varepsilon}}\varphi(t)\leq s_0^{-\frac{\varepsilon}{p+\varepsilon}}s^{\frac{\varepsilon}{p+\varepsilon}}\varphi(t),\quad t\in(0,1).$$
Hence, we have
$$\varphi(st)\leq {\rm const}\cdot s^{\frac{\varepsilon}{p+\varepsilon}}\varphi(t),\quad s,t\in(0,1)$$
or, equivalently,
$$(st)^{-\frac{\varepsilon}{p+\varepsilon}}\varphi(st)\leq {\rm const}\cdot t^{-\frac{\varepsilon}{p+\varepsilon}}\varphi(t),\quad s,t\in(0,1).$$
% Renaming $st$ into $s,$ we obtain
% $$s^{-\frac{\varepsilon}{p+\varepsilon}}\varphi(s)\leq {\rm const}\cdot t^{-\frac{\varepsilon}{p+\varepsilon}}\varphi(t),\quad 0\leq s\leq t\leq 1.$$
% Therefore, from the definition of $\varphi,$ we have
% $$\frac{s}{M^{-1}(s)^{p+\varepsilon}}\leq {\rm const}\cdot\frac{t}{M^{-1}(t)^{p+\varepsilon}},\quad 0\leq s\leq t\leq 1.$$
% Replacing $s$ with $M(s)$ and $t$ with $M(t)$ (precisely here, we use the assumption that $M:(0,1)\to (0,1)$ is a bijection), we obtain that
% \begin{equation}\label{const}
% \frac{M(s)}{s^{p+\varepsilon}}\leq  {\rm const}\cdot\frac{M(t)}{t^{p+\varepsilon}},\quad 0\leq t\leq s\leq 1.
% \end{equation}
% Now, we need to make a small digression and explain that the constant in \eqref{const} can be chosen to be $1$. To this end, let us define
% $$
% M_0(t):=\sup_{0<s<1}\frac{M(st)}{s^{p+\varepsilon}},\ t\in (0,1).
% $$
% Assume for a second that we have already shown that $M_0\sim M$. Then, the function $s\to \frac{M_0(s)}{s^{p+\varepsilon}}$ is increasing and we could indeed select the constant in \eqref{const} to be $1$. To see the latter equivalence,  observe that
Therefore, it follows from the definition of $\varphi$ that
$$
{M(st)}\leq  {\rm const}\cdot {s^{p+\varepsilon}}\cdot M(t), \ s,t\in (0,1).
$$
The argument is completed, by referring to Lemma \ref{Lemma 20-version 1}.
%the following result, whose validity is established by a careful inspection of the proof of  \cite[Lemma 5]{ASorlicz}.
%\begin{lem}\label{Lemma 20-version 1}
%Let $1\leq p<\infty$. An Orlicz function $M:[0,\infty) \to [0,\infty)$ satisfying $\Delta_2$-condition at $0$ is equivalent to a $p$-convex Orlicz function on the
%segment $[0,1]$ if and only if there exists a constant $C>0$ such that for all $0<s<1$ and all $0<t\leq 1$ we have
%\begin{equation}\label{p-convex Orlicz functions}
%M(st)\le Cs^pM(t).
%\end{equation}
%\end{lem}
%We omit further details.
\end{proof}

Now, we prove a dual result.

\begin{proposition}\label{8and10} Let  $M$ be a $q$-concave Orlicz function for some $1<q<\infty$. The following conditions are equivalent:
\begin{enumerate} [{\rm (i)}]\label{lemma8}
\item The function $M$ is equivalent to a $(q-\varepsilon)$-concave Orlicz function for some $\varepsilon>0$  on the
segment $[0,1]$;
%\item\label{equivalence1}  For $f_0$ given by \eqref{f0 def}, we have
%$$f_0^p(t)\sim\frac1t\int_0^tf_0^p(s)ds,\quad t\in(0,1).$$
\item
\begin{equation}\label{lemma10}\frac1t\int_t^1\mathfrak{m}^q(s)\,ds\leq {\rm const}\cdot \mathfrak{m}^q(t),\quad t\in(0,1).
\end{equation}

\end{enumerate}
\end{proposition}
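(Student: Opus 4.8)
The plan is to mirror the proof of Proposition~\ref{7and9}, exploiting the duality between $p$-convexity and $q$-concavity. As before I introduce the auxiliary function, this time $\psi(t)=t\mathfrak{m}^q(t)$ for $t\in(0,1)$, and reformulate both conditions in terms of $\psi$. Condition~\eqref{lemma10} is exactly the statement that the \emph{upper} Cesaro average $\frac1t\int_t^1\psi(s)/s\,ds$ is controlled by $\psi(t)$, so the whole argument will run through the behaviour of $\psi$ under dilations, but now with the inequalities reversed relative to the convex case.

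For the implication ${\rm (i)}\to{\rm (ii)}$, I would first observe that since $M$ is $q$-concave, the map $t\mapsto(M^{-1}(t))^q$ is convex on $(0,1]$, which gives a lower bound $(M^{-1}(t))^q/(M^{-1}(st))^q\geq s^{-1}$ for $0<s,t\le1$; assuming in addition that $M$ is $(q-\varepsilon)$-concave sharpens this to a dilation estimate of the form $\inf_{t\in(0,1)}\psi(st)/\psi(t)\geq s^{\frac{\varepsilon}{q-\varepsilon}}$ for $0<s\le1$ (equivalently an upper bound $\psi(st)/\psi(t)\le s^{-\frac{\varepsilon}{q-\varepsilon}}$ for the reciprocal direction). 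I then want a version of the Cesaro-boundedness lemma (Lemma~II.1.4 in \cite{KPS}, or its reflection) adapted to the integral $\int_t^1\psi(s)/s\,ds$ over the upper range; the required decay of $\psi(st)/\psi(t)$ as $s\to\infty$ (i.e. as we integrate toward $1$) is what makes this upper Cesaro average converge and be dominated by $\psi(t)$, yielding \eqref{lemma10}.

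For the converse ${\rm (ii)}\to{\rm (i)}$, I would run the contradiction argument of Proposition~\ref{7and9} in reverse. Since $M$ is $q$-concave one checks that $\psi$ is decreasing (from $M(s)/s^q\ge M(t)/t^q$ for $0\le s\le t\le1$, after substituting $M^{-1}$). Starting from $\frac1t\int_t^1\psi(s)/s\,ds\le C\psi(t)$, I fix $s_1$ with $s_1^{-1}$ large enough (say $\log(s_1^{-1})>2C$ in the appropriate direction) and argue that $\sup_{t}\psi(t)/\psi(s_1^{-1}t)<1$, because otherwise the monotonicity of $\psi$ forces the upper integral to exceed $C\psi(t)$. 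Iterating this single-scale estimate across the dyadic scales $s\in(s_1^{n+1},s_1^{n})$ produces $\psi(st)\geq{\rm const}\cdot s^{-\frac{\varepsilon}{q-\varepsilon}}\psi(t)$ for a suitable $\varepsilon\in(0,q)$, which unwinds via the definition of $\psi$ to the lower dilation bound $C^{-1}s^{q-\varepsilon}M(t)\le M(st)$ for $s,t\in(0,1)$. Invoking Lemma~\ref{Lemma 20-version 2} with exponent $q-\varepsilon$ then identifies $M$ as equivalent to a $(q-\varepsilon)$-concave function.

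The main obstacle I anticipate is handling the integral over the upper range $[t,1]$ rather than $[0,t]$. The concave side is genuinely a reflection of the convex side, but the Cesaro-type lemma from \cite{KPS} is naturally stated for averages over $[0,t]$; I must either locate its companion form for $\int_t^1$ or derive it directly from the dilation estimate, and I need to keep careful track of which dilation factor ($s$ small versus $s$ large) controls convergence at each endpoint. A secondary technical point is that, unlike the $p$-convex case, here I do not assume the $\Delta_2$-condition explicitly in the hypothesis; I should verify that $q$-concavity already supplies the regularity of $\mathfrak{m}$ (hence of $\psi$) needed for the integrals to make sense and for the substitution $M^{-1}$ to behave well near $0$.
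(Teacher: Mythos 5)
Your proposal follows the paper's proof essentially verbatim: the same auxiliary function $\psi(t)=t\mathfrak{m}^q(t)$, the same use of convexity of $t\mapsto (M^{-1}(t))^{q-\varepsilon}$ to obtain the dilation estimate $\psi(st)\le s^{-\frac{\varepsilon}{q-\varepsilon}}\psi(t)$ for $s>1$, the companion Cesaro lemma you hope to locate is exactly Lemma~II.1.5 of \cite{KPS} (which the paper invokes), and your contradiction-plus-iteration argument for ${\rm (ii)}\to{\rm (i)}$, ending with Lemma~\ref{Lemma 20-version 2}, is precisely the paper's. Only two cosmetic points: your single-scale claim should read $\sup_t \psi(s_1^{-1}t)/\psi(t)<1$ (as written the ratio is inverted and trivially $\geq 1$ since $\psi$ decreases, and similarly the exponent sign in your first dilation bound for $0<s\le 1$ should be negative), and your worry about the missing $\Delta_2$-condition is moot because $q$-concavity already gives $M(2u)\le 2^qM(u)$.
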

\begin{proof}
 Define the function $\psi$ by setting $$\psi(t):=t\mathfrak{m}^q(t),\quad t\in(0,1).$$
${\rm (i)\to (ii)}$.
It suffices to verify that
$$\int_t^1 \frac{\psi(s)\,ds}{s}\leq {\rm const}\cdot \psi(t),\quad t \in (0,1).
$$
We have
$$\sup\frac{\psi(st)}{\psi(t)}=s\cdot\sup\left(\frac{(M^{-1}(t))^{q-\varepsilon}}{(M^{-1}(st))^{q-\varepsilon}}\right)^{\frac{q}{q-\varepsilon}},$$
where the supremums are taken over all $t\in (0,1)$ and $s>1$ such that $0<st\leq 1$.
Since $M$ is $(q-\varepsilon)-$concave, it follows that the mapping
$$t\to(M^{-1}(t))^{q-\varepsilon},\quad t\in (0,1),$$
is convex. In particular, we have
$$\frac{(M^{-1}(t))^{q-\varepsilon}}{(M^{-1}(st))^{q-\varepsilon}}\leq s^{-1},\quad s>1,\ 0<st\leq 1.$$
Therefore,
$$\sup\frac{\psi(st)}{\psi(t)}\leq s^{-\frac{\varepsilon}{q-\varepsilon}}<1,$$
where again the supremum is taken over all $t\in (0,1)$ and $s>1$ such that $0<st\leq 1$.
Applying now Lemma II.1.5 in \cite{KPS}, we infer \eqref{lemma10}.

${\rm (ii)\to (i)}$. Since $M$ is $q-$concave, it follows that
$$\frac{M(s)}{s^q}\geq\frac{M(t)}{t^q},\quad 0\leq s\leq t\leq 1.$$
Replacing $s$ with $M^{-1}(s)$ and $t$ with $M^{-1}(t),$ we infer that $\psi$ is decreasing.

By the assumption, we have
$$\int_t^1\frac{\psi(s)\,ds}{s}\leq C\psi(t),\quad t\in(0,1),$$
for some $C>0.$ Take $s_0>e^{2C}.$ We claim that
\begin{equation}\label{t down eq}
\sup_{t\in(0,s_0^{-1})}\frac{\psi(s_0t)}{\psi(t)}<1.
\end{equation}
Indeed, suppose that supremum in \eqref{t down eq} equals $1.$ In particular, there exists $t\in(0,s_0^{-1})$ such that $\psi(s_0t)\geq\psi(t)/2.$ Since $\psi$ is decreasing, it follows that
$$\int_t^1\frac{\psi(s)\,ds}{s}\geq \int_t^{s_0t}\frac{\psi(s)\,ds}{s}\geq\psi(s_0t)\log(\frac{s_0t}{t})>C\psi(t).$$
This contradiction proves the claim.

According to \eqref{t down eq}, we can fix $b\in(0,1)$ such that
\begin{equation}\label{s0 fi2}
\psi(s_0t)\leq b\psi(t),\quad t\in(0,s_0^{-1}).
\end{equation}
Without loss of generality, $b>s_0^{-1}.$ Hence, there exists $\varepsilon>0$ such that $b=s_0^{-\frac{\varepsilon}{q-\varepsilon}}.$

Let $s>1$ and $0<t<s^{-1}$. We can find $n\in \mathbb{N}$ such that $s\in(s_0^n,s_0^{n+1})$. Again appealing to the fact that $\psi$ is decreasing, we have
$$\psi(st)\leq\psi(s_0^nt)\stackrel{\eqref{s0 fi2}}{\leq}s_0^{-\frac{n\varepsilon}{q-\varepsilon}}\psi(t)\leq s_0^{\frac{\varepsilon}{q-\varepsilon}}s^{-\frac{\varepsilon}{q-\varepsilon}}\psi(t).$$
It follows that
$$\psi(st)\leq {\rm const}\cdot s^{-\frac{\varepsilon}{q-\varepsilon}}\psi(t),\quad s>1,t\in(0,s^{-1})$$
or, equivalently,
%$$(st)^{\frac{\varepsilon}{q-\varepsilon}}\psi(st)\leq{\rm const}\cdot t^{\frac{\varepsilon}{q-\varepsilon}}\psi(t),\quad s>1,t\in(0,s^{-1}).$$
%Renaming $st$ into $s,$ we obtain
$$s^{\frac{\varepsilon}{q-\varepsilon}}\psi(s)\leq {\rm const}\cdot t^{\frac{\varepsilon}{q-\varepsilon}}\psi(t),\quad 0\leq t\leq s\leq 1.$$
Therefore, from the definition of $\psi,$ we have
$$\frac{s}{M^{-1}(s)^{q-\varepsilon}}\leq {\rm const}\cdot\frac{t}{M^{-1}(t)^{q-\varepsilon}},\quad 0\leq t\leq s\leq 1.$$
%Replacing $s$ with $M(s)$ and $t$ with $M(t),$ we obtain that
%$$\frac{M(s)}{s^{q-\varepsilon}}\leq  {\rm const}\cdot\frac{M(t)}{t^{q-\varepsilon}},\quad 0\leq t\leq s\leq 1,$$
or 
$${\rm const}\cdot{s^{q-\varepsilon}}\cdot M(t)\leq M(st)  ,\quad \forall t, s\in (0,1].$$
Applying Lemma \ref{Lemma 20-version 2}, we complete the proof.
% The argument is completed, by referring to the following result, whose validity is established by a careful inspection of the proof of  \cite[Lemma 5]{ASorlicz}.
%\begin{lem}\label{Lemma 20-version 2} Let $1< q<\infty$. An Orlicz function $M:[0,\infty) \to [0,\infty)$  is equivalent to a $q$-concave Orlicz function on the
%segment $[0,1]$ if and only if there exists a constant $C>0$ such that for all $0<s<1$ and all $0<t\leq 1$ we have
%\begin{equation}\label{p-concave Orlicz functions}
%C^{-1}s^qM(t)\le M(st).
%\end{equation}
%\end{lem}
%%We omit further details.
\end{proof}

The following theorem answers the question stated in the title of the present section.

\begin{thm}\label{preddverie} Let $1\leq p<2$ and let $M$ be a $p-$convex and $2-$concave Orlicz function. The following conditions are equivalent:
\begin{enumerate}[{\rm (i)}]
\item Equivalence \eqref{main f cond} holds for $f=\mathfrak{m}.$
\item $M$ is $(p+\varepsilon)-$convex and $(2-\varepsilon)-$concave for some $\varepsilon>0.$
\end{enumerate}
\end{thm}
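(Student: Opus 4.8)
The plan is to reduce the claimed equivalence to the two Cesàro-type estimates already characterized in Propositions \ref{7and9} and \ref{8and10}. First I would record that $\mathfrak{m}(t)=1/M^{-1}(t)$ is non-increasing on $(0,1)$ (because $M^{-1}$ is increasing), so that $\mathfrak{m}^*=\mathfrak{m}$ and, for $f=\mathfrak{m}$, the right-hand side of \eqref{main f cond} becomes $A(t)+B(t)$, where I set
$$A(t):=\Big(\frac1t\int_0^t\mathfrak{m}^p(s)\,ds\Big)^{1/p},\qquad B(t):=\Big(\frac1t\int_t^1\mathfrak{m}^2(s)\,ds\Big)^{1/2}.$$
Thus condition (i) of the theorem is precisely the equivalence $\mathfrak{m}(t)\sim A(t)+B(t)$ on $(0,1)$.

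The key observation is that one half of this equivalence is automatic. Since $\mathfrak{m}$ is non-increasing, $\frac1t\int_0^t\mathfrak{m}^p(s)\,ds\ge\mathfrak{m}^p(t)$, whence $A(t)\ge\mathfrak{m}(t)$ and therefore $\mathfrak{m}(t)\le A(t)+B(t)$ for all $t\in(0,1)$, with no hypothesis on $M$. Consequently (i) holds if and only if the reverse bound $A(t)+B(t)\le{\rm const}\cdot\mathfrak{m}(t)$ holds, and since $A,B\ge0$ this is in turn equivalent to the conjunction of the two one-sided estimates $A(t)\le{\rm const}\cdot\mathfrak{m}(t)$ and $B(t)\le{\rm const}\cdot\mathfrak{m}(t)$. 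Raising these to the $p$-th and the square power respectively, they become exactly condition (ii) of Proposition \ref{7and9} and condition (ii) of Proposition \ref{8and10} taken with $q=2$.

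It then remains to invoke the two propositions. Before applying Proposition \ref{7and9} I would check that its standing hypothesis is met: since $M$ is $2$-concave, Lemma \ref{Lemma 20-version 2} (with $q=2$) yields $M(2u)\le C\cdot 2^2 M(u)$ for small $u$, so $M$ satisfies the $\Delta_2$-condition at $0$, while $M$ is $p$-convex by assumption. Proposition \ref{7and9} then identifies $A(t)\le{\rm const}\cdot\mathfrak{m}(t)$ with $M$ being equivalent on $[0,1]$ to a $(p+\varepsilon_1)$-convex function for some $\varepsilon_1>0$, and Proposition \ref{8and10} (with $q=2$) identifies $B(t)\le{\rm const}\cdot\mathfrak{m}(t)$ with $M$ being equivalent on $[0,1]$ to a $(2-\varepsilon_2)$-concave function for some $\varepsilon_2>0$. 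Combining these yields the equivalence of (i) with (ii), understood, as throughout the paper, up to equivalence of $M$ near $0$. The only remaining bookkeeping is to reconcile the two exponents: setting $\varepsilon:=\min(\varepsilon_1,\varepsilon_2)$ works, since raising the index of convexity and lowering the index of concavity only weaken the respective conditions, so $M$ is simultaneously $(p+\varepsilon)$-convex and $(2-\varepsilon)$-concave.

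I expect no serious obstacle, as the substantive content is entirely carried by Propositions \ref{7and9} and \ref{8and10}; the role of this theorem is organizational. The only point requiring care is the reduction of the second paragraph — recognizing that the lower estimate $\mathfrak{m}\le A$ is free, so that the equivalence $\sim$ collapses to two independent upper Cesàro estimates — together with the minor verifications that $2$-concavity supplies the $\Delta_2$-condition demanded by Proposition \ref{7and9} and that the two values of $\varepsilon$ can be unified.
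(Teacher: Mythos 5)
Your proof is correct and follows essentially the same route as the paper: the paper likewise observes that $\mathfrak{m}(t)\le\bigl(\frac1t\int_0^t\mathfrak{m}^p(s)\,ds\bigr)^{1/p}$ holds trivially because $\mathfrak{m}$ is decreasing, so that condition (i) reduces to the two one-sided Ces\`aro estimates, and then deduces both implications directly from Propositions \ref{7and9} and \ref{8and10}. Your extra verifications --- that $2$-concavity supplies the $\Delta_2$-condition required by Proposition \ref{7and9} (via Lemma \ref{Lemma 20-version 2}) and that the two values of $\varepsilon$ can be unified by taking the minimum --- are correct points the paper passes over silently.
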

\begin{proof} ${\rm (ii)}\Rightarrow {\rm (i)}$. If $M$ is $(p+\varepsilon)-$convex for some $\varepsilon>0,$ then 
it follows from Proposition \ref{7and9} that
\begin{equation}\label{ss1}
\left(\frac1t\int_0^t\mathfrak{m}^p(s)\,ds\right)^{1/p} \leq {\rm const}\cdot \mathfrak{m}(t),\quad t\in(0,1).
\end{equation}
If $M$ is $(2-\varepsilon)-$concave for some $\varepsilon>0,$ then Proposition \ref{8and10} implies 
\begin{equation}\label{ss2}
\left(\frac1t\int_t^1\mathfrak{m}^2(s)\,ds\right)^{1/2} \leq {\rm const}\cdot \mathfrak{m}(t) ,\quad t\in(0,1).
\end{equation}
Observe now that the inequality
\begin{equation}\label{ss1-converse}
 \mathfrak{m}(t)\leq \left(\frac1t\int_0^t\mathfrak{m}^p(s)\,ds\right)^{1/p}  ,\quad t\in(0,1)
\end{equation}
holds trivially, due to the fact that $\mathfrak{m}$ is decreasing. %{\bf It is also easy to see} that
% \begin{equation}\label{ss2-converse}
% f_0(t)\leq \left(\frac1t\int_t^1f_0^2(s)ds\right)^{1/2}  ,\quad t\in(0,1)
% \end{equation}
% holds.
%
The equivalence \eqref{main f cond} for $f=\mathfrak{m}$ follows immediately from \eqref{ss1}, \eqref{ss2} and \eqref{ss1-converse}.

${\rm (i)}\Rightarrow {\rm (ii)}$.  Suppose that \eqref{main f cond} holds for $f=\mathfrak{m}.$
Then, we have \eqref{ss1} and \eqref{ss2}.
%
%It follows that
%$$\left(\frac1t\int_0^t\mathfrak{m}^p(s)ds\right)^{1/p}\leq{\rm const}\cdot \mathfrak{m}(t),\quad t\in(0,1)$$
%and
%$$\left(\frac1t\int_t^1\mathfrak{m}^2(s)ds\right)^{1/2}\leq{\rm const}\cdot \mathfrak{m}(t),\quad t\in(0,1).$$
Applying Propositions \ref{7and9}  and \ref{8and10}, we obtain that $M$ is $(p+\varepsilon)-$convex and $(2-\varepsilon)-$concave
for some $\varepsilon>0,$ and the proof is completed.
\end{proof}

\section{When does equivalence \eqref{main f cond} hold for a unique $f$ (up to equivalence near $0$)?}\label{main section}

This section contains the proof of Theorem \ref{main theorem}.

\begin{proof}[Proof of Theorem \ref{main theorem}]

The implication ${\rm (ii)\to (iii)}$ is obvious and the implication ${\rm (iii)\to (i)}$ follows by combining results of Proposition \ref{prelim lemma} and Theorem \ref{preddverie}.

${\rm (i)\to (ii)}.$ We begin with the following technical lemma.

\begin{lem}\label{large N lemma} Let $1\leq p<\infty$, $1<q<\infty$ and let $M$ be an Orlicz function.
\begin{enumerate}[{\rm (i)}]
\item\label{lb} If $M$ is $(q-\varepsilon)-$concave for some $\varepsilon>0,$ then
$${N}\sup_{t>0}\frac{\mathfrak{m}^q(Nt)}{\mathfrak{m}^q(t)}\to0,\quad N\to\infty.$$
\item\label{la} If $M$ is $(p+\varepsilon)-$convex for some $\varepsilon>0,$ then
$$\frac{1}{N}\cdot\sup_{t>0}\frac{\mathfrak{m}^p(\frac{t}{N})}{\mathfrak{m}^p(t)}\to0,\quad N\to\infty.$$
\end{enumerate}
\end{lem}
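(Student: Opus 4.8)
The plan is to reduce both parts to a single elementary fact about convex and concave functions vanishing at the origin, applied to the function $t\mapsto (M^{-1}(t))^r$ for the appropriate exponent $r$. Recall that $\mathfrak m(t)=1/M^{-1}(t)$, so that
$$\frac{\mathfrak m^q(Nt)}{\mathfrak m^q(t)}=\left(\frac{M^{-1}(t)}{M^{-1}(Nt)}\right)^q,\qquad \frac{\mathfrak m^p(t/N)}{\mathfrak m^p(t)}=\left(\frac{M^{-1}(t)}{M^{-1}(t/N)}\right)^p.$$
As already used in the proofs of Propositions \ref{7and9} and \ref{8and10}, $(q-\varepsilon)$-concavity of $M$ is equivalent to convexity of $\Phi(t):=(M^{-1}(t))^{q-\varepsilon}$, and $(p+\varepsilon)$-convexity of $M$ is equivalent to concavity of $\Psi(t):=(M^{-1}(t))^{p+\varepsilon}$; both functions satisfy $\Phi(0)=\Psi(0)=0$.

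For part \eqref{lb} I would first record the elementary fact that for a convex function $\Phi$ with $\Phi(0)=0$ the ratio $\Phi(t)/t$ is nondecreasing on $(0,\infty)$, whence $\Phi(Nt)\ge N\Phi(t)$ for every $N\ge1$ and every $t>0$. Writing the target ratio as
$$\frac{\mathfrak m^q(Nt)}{\mathfrak m^q(t)}=\left(\frac{\Phi(t)}{\Phi(Nt)}\right)^{\frac{q}{q-\varepsilon}}$$
and inserting $\Phi(Nt)\ge N\Phi(t)$ gives the bound $\mathfrak m^q(Nt)/\mathfrak m^q(t)\le N^{-q/(q-\varepsilon)}$, uniformly in $t>0$. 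Multiplying by $N$ then yields
$$N\sup_{t>0}\frac{\mathfrak m^q(Nt)}{\mathfrak m^q(t)}\le N^{\,1-\frac{q}{q-\varepsilon}}=N^{-\frac{\varepsilon}{q-\varepsilon}}\longrightarrow0,\qquad N\to\infty,$$
since the exponent is strictly negative. This settles \eqref{lb}.

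Part \eqref{la} is the dual statement and I would treat it the same way, now using that for a concave $\Psi$ with $\Psi(0)=0$ the ratio $\Psi(t)/t$ is nonincreasing, so that $\Psi(t/N)\ge \Psi(t)/N$ for $N\ge1$. This gives $\mathfrak m^p(t/N)/\mathfrak m^p(t)=(\Psi(t)/\Psi(t/N))^{p/(p+\varepsilon)}\le N^{p/(p+\varepsilon)}$ uniformly in $t$, and multiplying by $1/N$ produces the bound $N^{-\varepsilon/(p+\varepsilon)}\to0$, as required.

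The argument is essentially routine; the only points requiring care are, first, the passage from $r$-concavity or $r$-convexity of $M$ to convexity or concavity of $(M^{-1})^r$, which is precisely the step already exploited in Section \ref{f=f_0}, and second, checking that the decisive estimate $\Phi(Nt)\ge N\Phi(t)$ (respectively $\Psi(t/N)\ge\Psi(t)/N$) holds for all $t>0$, so that the resulting power bound on the ratio is genuinely uniform and survives the supremum. If $(q-\varepsilon)$-concavity (respectively $(p+\varepsilon)$-convexity) were available only up to equivalence on $[0,1]$ rather than globally, the same computation would go through with an extra multiplicative constant, which does not affect the limit; this is the only place where one must be slightly attentive.
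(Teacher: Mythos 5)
Your proof is correct and follows essentially the same route as the paper: your statement that $\Phi(t)/t$ is nondecreasing (for convex $\Phi=(M^{-1})^{q-\varepsilon}$ with $\Phi(0)=0$) is, after the substitution $t\mapsto M^{-1}(t)$, exactly the paper's observation that $t\,\mathfrak{m}^{q-\varepsilon}(t)$ is decreasing because $M(t)/t^{q-\varepsilon}$ is, and both arguments yield the identical uniform bounds $N^{-\varepsilon/(q-\varepsilon)}$ and $N^{-\varepsilon/(p+\varepsilon)}$. The only cosmetic difference is that you write out part (ii) in full while the paper proves part (i) and declares the other case similar.
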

\begin{proof} Proofs of \eqref{lb} and \eqref{la} are very similar. So, we prove \eqref{lb} only.

Since $M$ is $(q-\varepsilon)-$concave, it follows that the mapping
$$t\to \frac{M(t)}{t^{q-\varepsilon}},\quad t>0,$$
is decreasing. Hence, the mapping
$$t\to t\mathfrak{m}^{q-\varepsilon}(t)=\frac{t}{(M^{-1}(t))^{q-\varepsilon}},\quad t>0,$$
is also decreasing. Therefore,
$$N^{\frac{q}{q-\varepsilon}}\sup_{t>0}\frac{\mathfrak{m}^q(Nt)}{\mathfrak{m}^q(t)}=\left(\sup_{t>0}\frac{Nt
\mathfrak{m}^{q-\varepsilon}(Nt)}{t\mathfrak{m}^{q-\varepsilon}(t)}\right)^{\frac{q}{q-\varepsilon}}\leq 1,$$
whence
$$N\sup_{t>0}\frac{\mathfrak{m}^q(Nt)}{\mathfrak{m}^q(t)}\leq N^{-\frac{\varepsilon}{q-\varepsilon}}\to0\quad{\rm if}\  N\to\infty.$$
\end{proof}

Now,  let $M$ be a $(p+\varepsilon)-$convex and $(2-\varepsilon)-$concave Orlicz function and let $f$ be a mean zero function from $L_p$.
Suppose that the sequence $\{f_k\}_{k=1}^{\infty}$ of independent copies of  $f$ is equivalent to the unit vector basis $\{e_k\}_{k=1}^{\infty}$ in $l_M$.
It suffices to show that the functions $f^*$ and $\mathfrak{m}$ are equivalent for small values of argument.
%near $0$.
For simplicity we  abuse the notation assuming that $f=f^*.$ By Proposition \ref{prelim lemma} we know that the equivalence \eqref{main f cond} holds for $f$, that is,
\begin{equation}\label{main f cond-1}
\mathfrak{m}(t)\sim \left(\frac1t\int_0^tf(s)^p\,ds\right)^{1/p}+\left(\frac1t\int_t^1f(s)^2\,ds\right)^{1/2},\quad t\in(0,1).
\end{equation}
By Theorem \ref{preddverie}, we also have
\begin{equation}\label{main f cond-2}
\mathfrak{m}(t)\sim \left(\frac1t\int_0^t\mathfrak{m}(s)^p\,ds\right)^{1/p}+\left(\frac1t\int_t^1\mathfrak{m}(s)^2\,ds\right)^{1/2},\quad t\in(0,1).
\end{equation}
Observe now that the estimate
\begin{equation}\label{one side}
 f(t)\leq C_1\cdot \mathfrak{m}(t), \quad t\in (0,1),
\end{equation}
for some $C_1>0$ follows immediately from \eqref{main f cond-1} and the (already used) inequality
$$f(t)\leq \left(\frac1t\int_0^tf(s)^p\,ds\right)^{1/p},\;\;t\in (0,1).$$ 
Thus, we need to show that the estimate
\begin{equation}\label{other side}
\mathfrak{m}(t)\leq {\rm const}\cdot f(t),\quad t\in(0,1),
\end{equation}
holds for all sufficiently small $t\in (0,1).$ 
% Since $f=f^*,$ it follows that
% \begin{equation}\label{f leq f0}
% f(t)\leq(\frac1t\int_0^tf^p(s)ds)^{1/p}\leq C\mathfrak{m}(t).
% \end{equation}
%
%Our objective is to prove the converse inequality to \eqref{f leq f0}.
By Propositions \ref{7and9} and \ref{8and10}, there exists a constant $C_0>0$ such that
\begin{equation}\label{he1}
\frac1t\int_0^t\mathfrak{m}^p(s)\,ds\leq C_0^p\mathfrak{m}^p(t),\quad t\in(0,1),
\end{equation}
\begin{equation}\label{te1}
\frac1t\int_t^1\mathfrak{m}^2(s)\,ds\leq C_0^2\mathfrak{m}^2(t),\quad t\in(0,1).
\end{equation}
Moreover, there is a constant $C>0$ such that for a given $t\in(0,1),$ from \eqref{main f cond-1} it follows that either
\begin{equation}\label{ass2}
\Big(\frac1t\int_t^1f^2(s)\,ds\Big)^{1/2}\geq\frac1{2C}\mathfrak{m}(t),
\end{equation}
or
\begin{equation}\label{ass1}
\Big(\frac1t\int_0^tf^p(s)\,ds\Big)^{1/p}\geq\frac1{2C}\mathfrak{m}(t).
\end{equation}
By Lemma \ref{large N lemma}, we can fix $N$ so large that
\begin{equation}\label{large N}
\sup_{t>0}\frac{\mathfrak{m}^2(Nt)}{\mathfrak{m}^2(t)}\leq \frac1{8NC^2C_1^2},\quad\sup_{t>0}\frac{\mathfrak{m}^p(\frac{t}{N})}{\mathfrak{m}^p(t)}\leq\frac{N}{2^{p+1}C_1^{p}C^p}.
\end{equation}
Let $t\in(0,1/N).$
Firstly, we consider the situation when \eqref{ass2} holds. Taking squares in this inequality
and then applying \eqref{one side}, we obtain
\begin{eqnarray*}
 \frac1{4C^2}\mathfrak{m}^2(t) &\leq&\frac1t\int_t^1f^2(s)\,ds=\frac1t\int_t^{Nt}f^2(s)\,ds+\frac1t\int_{Nt}^1f^2(s)\,ds\\
&\leq&(N-1)f^2(t)+\frac{NC_1^2}{Nt}\int_{Nt}^1\mathfrak{m}^2(s)\,ds.
\end{eqnarray*}
%\frac1{4C^2}\mathfrak{m}^2(t)\leq\frac1t\int_t^1f^2(s)ds=\frac1t\int_t^{Nt}f^2(s)ds+\frac1t\int_{Nt}^1f^2(s)ds.$$
%Taking \eqref{one side} into account, we have
%$$\frac1{4C^2}\mathfrak{m}^2(t)\leq(N-1)f^2(t)+\frac{NC_1^2}{Nt}\int_{Nt}^1\mathfrak{m}^2(s)ds.$$
Hence, by \eqref{te1}, we have
%Applying \eqref{te1} to the integral on the right hand side, we obtain
$$\frac1{4C^2}\mathfrak{m}^2(t)\leq (N-1)f^2(t)+NC_1^2C_0^2\mathfrak{m}^2(Nt).$$
Combining the latter estimate with the first inequality in \eqref{large N}, we obtain
%Using the first inequality in \eqref{large N}, we obtain
$$(N-1)f^2(\frac{t}{N})\geq (N-1)f^2(t)\geq\frac1{4C^2}\mathfrak{m}^2(t)-NC_1^2C_0^2\mathfrak{m}^2(Nt)
\stackrel{\eqref{large N}}{\geq}\frac1{8C^2}\mathfrak{m}^2(t).$$

If \eqref{ass1} holds, then
$$\frac1{2^pC^p}\mathfrak{m}^p(t)\leq\frac1t\int_0^tf^p(s)\,ds=\frac1t\int_0^{t/N}f^p(s)\,ds+\frac1t\int_{t/N}^tf^p(s)\,ds.$$
Taking \eqref{one side} and \eqref{he1} into account, we obtain
\begin{eqnarray*}
 \frac1{2^pC^p}\mathfrak{m}^p(t) &\leq& \frac{C_1^p/N}{t/N}\int_0^{t/N}\mathfrak{m}^p(s)\,ds+(1-\frac1N)f^p(\frac{t}{N})\\
&\leq&\frac1NC_1^pC_0^p\mathfrak{m}^p(\frac{t}{N})+(1-\frac1N)f^p(\frac{t}{N}).
\end{eqnarray*}
%$$\frac1{2^pC^p}\mathfrak{m}^p(t)\leq\frac{C_1^p/N}{t/N}\int_0^{t/N}\mathfrak{m}^p(s)ds+(1-\frac1N)f^p(\frac{t}{N}).$$
%Applying \eqref{he1} to the integral on the right hand side, we obtain
%$$\frac1{2^pC^p}\mathfrak{m}^p(t)\leq \frac1NC_1^pC_0^p\mathfrak{m}^p(\frac{t}{N})+(1-\frac1N)f^p(\frac{t}{N}).$$
We infer from this estimate and the second inequality in \eqref{large N} that
%Using the second inequality in \eqref{large N} holds, we infer that
$$(1-\frac1N)f^p(\frac{t}{N})\geq\frac1{2^pC^p}\mathfrak{m}^p(t)-\frac1NC^pC_0^p\mathfrak{m}^p(\frac{t}{N})\stackrel{\eqref{large N}}
{\geq}\frac1{2^{p+1}C^p}\mathfrak{m}^p(t).$$

In either case, we have
$$f(\frac{t}{N})\geq {\rm const}\cdot \mathfrak{m}(t),\quad t\in (0,\frac1N),$$
for a universal constant.
Since $\mathfrak{m}(t)\sim \mathfrak{m}(t/N),$ it follows that
$$f(t)\geq {\rm const}\cdot \mathfrak{m}(t),\quad t\in(0,\frac1{N^2}).$$
The latter inequality together with \eqref{one side} suffices to conclude the proof of implication ${\rm (i)\to (ii)}.$
\end{proof}

\section{Sharpness of Theorem \ref{main theorem}}

Let $\{h_k\}_{k=1}^\infty$ (respectively, $\{g_k\}_{k=1}^\infty$) be a sequence of pairwise disjoint measurable subsets of $(0,1)$ such that
$\lambda(h_k)=2^{-k-2^k}$ (respectively, $\lambda(g_k)=4^{-k-4^k}$), $k\geq 1$.
We define functions $x,y\in L_1(0,1)$ by setting
\begin{equation}\label{def x,y}
x=\sum_{k=1}^\infty 2^{2^k}\chi_{h_k},\quad y=\sum_{k=1}^\infty 4^{4^k}\chi_{g_k},
\end{equation}
($\chi_{c}$ is the indicator function of a set $c$).
\begin{lem}\label{sharpness lemma} We have
%For every $t\in(0,1),$ we have
$$\int_0^1\min\{x(s),tx^2(s)\}\,ds\sim\int_0^1\min\{y(s),ty^2(s)\}\,ds\sim\frac1{\log(e/t)},\;\;0< t\le 1.$$
\end{lem}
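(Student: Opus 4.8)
The plan is to compute each integral by hand, using that $x$ is constant on each atom $h_k$. Writing $\min\{x,tx^2\}=x\,\min\{1,tx\}$ and integrating termwise, I would first record
$$
\int_0^1\min\{x(s),tx^2(s)\}\,ds=\sum_{k=1}^\infty\lambda(h_k)\,\min\{2^{2^k},t\,4^{2^k}\}=\sum_{k=1}^\infty 2^{-k-2^k}\min\{2^{2^k},t\,4^{2^k}\}.
$$
On the $k$-th atom the linear term $2^{2^k}$ is the smaller one precisely when $t\ge 2^{-2^k}$, i.e. when $2^k\ge\log_2(1/t)$; in that case the $k$-th summand equals $2^{-k}$, and otherwise the quadratic term wins and the summand equals $t\,2^{2^k-k}$.

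Next I would split the series at the transition index. Assuming $t$ small, let $K=K(t)$ be the least integer with $2^K\ge\log_2(1/t)$, so that $\log_2(1/t)\le 2^K<2\log_2(1/t)$ and hence $2^{-K}\sim 1/\log(e/t)$. Then the integral equals
$$
\sum_{k\ge K}2^{-k}+t\sum_{k=1}^{K-1}2^{2^k-k}=2^{1-K}+t\sum_{k=1}^{K-1}2^{2^k-k}.
$$
The geometric tail $2^{1-K}$ is exactly of order $1/\log(e/t)$ and already yields the lower bound. For the head I would use that the exponents $2^k$ grow doubly exponentially: the ratio of consecutive terms is $\ge 2$, so $\sum_{k=1}^{K-1}2^{2^k-k}\le 2\cdot 2^{2^{K-1}-(K-1)}$, while minimality of $K$ gives $2^{K-1}<\log_2(1/t)$ and hence $t\,2^{2^{K-1}}<t\cdot(1/t)=1$. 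Thus the head is at most $2^{2-K}$, so the whole integral lies between $2^{1-K}$ and $3\cdot 2^{1-K}$; in particular it is equivalent to $1/\log(e/t)$ for small $t$.

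The integral of $\min\{y,ty^2\}$ is treated identically with base $4$ replacing base $2$: the switch on $g_k$ occurs at $t=4^{-4^k}$, the linear contributions $4^{-k}$ sum to $\sim 4^{-K'}\sim 1/\log(e/t)$, and the quadratic contributions are again dominated by their top term through the same inequality $t\,4^{4^{K'-1}}<1$. Finally, for $t$ in a fixed interval $[t_0,1]$ with $t_0>0$ no series analysis is needed: once $t$ exceeds the first switch point ($1/4$ for $x$, $1/256$ for $y$) every atom lies in the linear regime, so the two integrals reduce to the constants $\sum_{k\ge1}2^{-k}=1$ and $\sum_{k\ge1}4^{-k}=\tfrac13$, each equivalent to the bounded positive function $1/\log(e/t)$ there. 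I expect the only delicate point to be the simultaneous bookkeeping at the index $K$: checking that the two-sided estimate $\log_2(1/t)\le 2^K<2\log_2(1/t)$ together with the domination of the doubly-exponential head by its largest term indeed pin both sums to the order $1/\log(e/t)$.
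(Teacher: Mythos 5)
Your proof is correct and takes essentially the same route as the paper's: both reduce the integral to the series $\sum_{k}2^{-k-2^k}\min\{2^{2^k},t\,4^{2^k}\}$, split it at the transition scale (your minimal $K$ with $2^K\ge\log_2(1/t)$ is just the paper's maximal $m$ with $2^{2^m}<1/t$ shifted by one), bound the doubly-exponential head by a constant times its top term, and kill that term via $t\,2^{2^{K-1}}<1$. Your explicit handling of the range $t\in[t_0,1]$ and of the base-$4$ case for $y$ merely spells out what the paper leaves to ``mutatis mutandi.''
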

\begin{proof} It is clear that
$$\int_0^1\min\{x(s),tx^2(s)\}\,ds=\sum_{2^{2^k}\geq 1/t}2^{2^k}\cdot 2^{-k-2^k}+t\cdot\sum_{2^{2^k}<1/t}2^{2^{k+1}}\cdot 2^{-k-2^k}.$$
Let $t<1/4.$ If $m$ is the maximal positive integer such that $2^{2^m}<1/t,$ then
$$\int_0^1\min\{x(s),tx^2(s)\}\,ds=\sum_{k=m+1}^{\infty}2^{-k}+t\cdot\sum_{k=1}^m2^{2^k-k}=2^{-m}+t\cdot\sum_{k=1}^m2^{2^k-k}.$$
Also, we have
$$\sum_{k=1}^m2^{2^k-k}\leq 2^{2^m-m}+(m-1)\cdot 2^{2^{m-1}-m+1}\leq 2^{2^m-m}+2^{2^{m-1}}\leq 2\cdot 2^{2^m-m}.$$
Therefore, we obtain
$$2^{-m}\leq\int_0^1\min\{x(s),tx^2(s)\}\,ds\leq 2^{-m}+2t\cdot 2^{2^m-m}\leq 3\cdot 2^{-m}.$$
It follows now from the definition of the number $m$ that
$$\frac1{\log_2(1/t)}\leq\int_0^1\min\{x(s),tx^2(s)\}\,ds\leq\frac6{\log_2(1/t)}.$$
The similar equivalence for $y$ follows {\it mutatis mutandi}.
\end{proof}

%Recall that the distribution function $n_x$ of the random variable $x$ is defined by the setting
%$$n_z(t)=m(\{s:\ z(s)>t\}),\quad t\in\mathbb{R}.$$

\begin{lem} Distributions of the functions $x$ and $y$ are not equivalent.
\end{lem}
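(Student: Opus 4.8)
The plan is to compute the distribution functions (or equivalently the decreasing rearrangements) of $x$ and $y$ explicitly from their definitions in \eqref{def x,y} and show that they cannot be equivalent for large $\tau$, i.e. that there is no constant $C$ with $C^{-1}n_x(\tau)\le n_y(\tau)\le Cn_x(\tau)$ for all large $\tau$. Since $x=\sum_k 2^{2^k}\chi_{h_k}$ with $\lambda(h_k)=2^{-k-2^k}$, the function $x$ takes the value $2^{2^k}$ on a set of measure $2^{-k-2^k}$, so for $\tau$ in the range $2^{2^{k-1}}\le\tau<2^{2^k}$ we have
$$
n_x(\tau)=\lambda\{x>\tau\}=\sum_{j\ge k}2^{-j-2^j}\sim 2^{-k-2^k}.
$$
The tail sum is dominated by its first term, so $n_x(\tau)\sim 2^{-k-2^k}$ on each dyadic-tower block. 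The analogous computation for $y=\sum_k 4^{4^k}\chi_{g_k}$ with $\lambda(g_k)=4^{-k-4^k}$ gives $n_y(\tau)\sim 4^{-k-4^k}$ for $4^{4^{k-1}}\le\tau<4^{4^k}$.

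First I would record these two asymptotics carefully, being explicit about the ranges of $\tau$ on which each holds; the key point is that $x$ and $y$ are jump functions supported on extremely sparse value sets (towers of base $2$ and base $4$ respectively), so between consecutive jumps each distribution function is constant while the argument $\tau$ ranges over an enormous interval. Next I would exploit this mismatch in the jump locations. The decisive feature is that on an interval where $n_x$ is constant (say equal to $2^{-k-2^k}$), the function $n_y$ will typically drop through many of its own jumps, or vice versa, because the two towers $\{2^{2^k}\}$ and $\{4^{4^k}\}$ interlace very unevenly. To make this rigorous I would pick a sequence $\tau_k\to\infty$ and compare the two distribution functions at well-chosen points: for instance, evaluate both at a value $\tau$ lying just below a large jump of $x$ but in the middle of a long flat stretch of $y$ (or conversely), and show that the ratio $n_x(\tau)/n_y(\tau)$ is unbounded along this sequence.

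The cleanest route is probably to argue by contradiction: suppose $n_x\sim n_y$ for large $\tau$ with constant $C$. Equivalence of distribution functions is equivalent to equivalence of the rearrangements $x^*$ and $y^*$, so it would suffice (and may be notationally simpler) to compare $x^*$ and $y^*$ directly, reading off $x^*(s)\sim 2^{2^k}$ for $s$ near $2^{-k-2^k}$ and $y^*(s)\sim 4^{4^k}$ for $s$ near $4^{-k-4^k}$. I would then locate a value of $s$ at which one rearrangement has just jumped to a new level while the other is still on a much lower plateau, and exhibit that the ratio $x^*(s)/y^*(s)$ (or its reciprocal) tends to infinity. Concretely, at $s=2^{-k-2^k}$ we have $x^*(s)\sim 2^{2^k}$, and one checks which tower value of $y$ this measure corresponds to; because $4^{-k-4^k}=2^{-2k-2\cdot 4^k}$ decays far faster than $2^{-k-2^k}$, the matching $y^*$-value lags by an exponentially growing factor, contradicting the assumed uniform bound.

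The main obstacle will be the bookkeeping needed to align the two indexing schemes: I must pin down, for a given $s$ (or $\tau$), exactly which block of the $x$-tower and which block of the $y$-tower contain it, and this requires comparing the two super-exponential sequences $2^{-k-2^k}$ and $4^{-k-4^k}=2^{-2k-2\cdot4^k}$ and tracking how the exponents $2^k$ versus $2\cdot 4^k$ diverge. Once the correct correspondence between the blocks is established, the growth of the ratio is immediate from the gap between $2^{2^k}$ and $4^{4^k}=2^{2\cdot 4^k}$; the only real care needed is to ensure the chosen test points genuinely fall in the interiors of the flat stretches so that the $\sim$ estimates from the distribution-function computation apply with uniform constants. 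Having produced a single sequence along which the ratio of the rearrangements is unbounded, the non-equivalence of the distributions follows at once, completing the proof.
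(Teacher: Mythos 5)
Your proposal is essentially the paper's own argument: the paper likewise computes the tower-type distribution functions and derives a contradiction by placing the test point in a long flat stretch of $n_y$ that straddles a jump of $n_x$ — concretely, it picks $t$ with both $t$ and $Ct$ in $(2^{2^{2k+1}},2^{2^{2k+2}})$ (possible since the flat stretches are super-exponentially long), gets $n_x(Ct)\ge 2^{-(2k+2)-2^{2k+2}}$ while $n_y(t)\le 2^{-2k-1-2^{2k+3}}$, and lets the gap between the exponents $2^{2k+2}$ and $2^{2k+3}$ blow up the ratio, exactly as you outline. One inaccuracy worth flagging: your parenthetical claim that equivalence of the distribution functions is \emph{equivalent} to equivalence of the rearrangements is false in general — pointwise equivalence $x^*\sim y^*$ corresponds to a horizontal (dilation) relation $n_x(C\tau)\le n_y(\tau)\le n_x(\tau/C)$, not to $C^{-1}n_x\le n_y\le Cn_x$; for instance $x^*(s)=\log(e/s)$ and $y^*=2x^*$ are equivalent rearrangements with $n_y(\tau)/n_x(\tau)=e^{\tau/2}\to\infty$. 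This does not derail you, since your primary route refutes the vertical notion $n_x\sim n_y$ directly (which is the notion used in Theorem \ref{main theorem}(ii)), and the paper in fact refutes the still weaker hypothesis $n_x(Ct)\le Cn_y(t)$, which is implied by either notion and so disposes of both at once.
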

\begin{proof} Suppose that $n_x(Ct)\leq Cn_y(t),$ $t>0.$ Fix $k$ such that
$$2^{2k+1}>\log_2 C +1$$
and select $t$ such that both $t$ and $Ct$ belong to the interval $(2^{2^{2k+1}},2^{2^{2k+2}}).$ Then, we have
$$n_x(Ct)=n_x(2^{2^{2k+1}})\geq 2^{-(2k+2)-2^{2k+2}}$$
and
$$n_y(t)=n_y(4^{4^k})\leq 2\cdot 4^{-(k+1)-4^{k+1}}=2^{-2k-1-2^{2k+3}}.$$
It follows from the preceding inequalities that
$$2^{2k+2+2^{2k+2}}\geq\frac1C\cdot 2^{2k+1+2^{2k+3}}$$
or, equivalently,
$$2k+2+2^{2k+2}\geq -\log_2(C)+2k+1+2^{2k+3}.$$
Clearly, the latter inequality contradicts to the choice of $k.$
\end{proof}

Let $\{x_k\}_{k=1}^\infty$ (respectively, $\{y_k\}_{k=1}^\infty$) be a sequence of independent copies of a mean zero 
random variable equimeasurable with $x$ (respectively, $y$),
where $x$ and $y$ are defined in \eqref{def x,y}. Let us show that the sequences $\{x_k\}_{k=1}^\infty$ and $\{y_k\}_{k=1}^\infty$ span in $L_1$ the
same Orlicz space $l_M$, where $M$ is equivalent to the function ${t}/{\log(e/t)}$ for small $t>0$. Note that $M$ does not satisfy condition (i) of Theorem \ref{main theorem};
more precisely, $M$ is not $(1+\varepsilon)$-convex for any $\varepsilon>0$. Taking into account Lemma \ref{our js lemma}, it suffices to prove the following proposition.
\begin{prop}\label{final prop} For every finitely supported $a=(a_k)_{k=1}^\infty,$ we have
$$\Big\|\sum_{k=1}^na_k\overline{x}_k\Big\|_{L_1+L_2}\sim\Big\|\sum_{k=1}^na_k\overline{y}_k\Big\|_{L_1+L_2}\sim\|(a_k)_{k=1}^\infty\|_{l_M}.$$
\end{prop}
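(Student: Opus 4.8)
The plan is to collapse all three quantities onto a single Orlicz modular. The starting point is the classical identification of $(L_1+L_2)(0,\infty)$, up to equivalence of norms, with the Orlicz space associated with $\Phi(u)=\min\{u,u^2\}$; concretely, for $w\in L_1+L_2$,
$$\|w\|_{L_1+L_2}\sim\inf\Big\{\lambda>0:\ \int_0^\infty\min\Big\{\frac{w^*(s)}{\lambda},\frac{w^*(s)^2}{\lambda^2}\Big\}\,ds\le1\Big\}.$$
I would record this as a preliminary observation: it follows from Holmstedt's formula for the couple $(L_1,L_2)$ already invoked in the proof of Proposition \ref{prelim lemma} (split $w$ at the level where $w^*=\lambda$, compare the resulting $\tfrac1\lambda\int_0^{s(\lambda)}w^*+\tfrac1{\lambda^2}\int_{s(\lambda)}^\infty (w^*)^2$ with the fixed cutoff $s=1$). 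The point is that the Luxemburg functional built from $\Phi$ is equivalent to $\|\cdot\|_{L_1+L_2}$ with universal constants.

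Next I exploit disjointness. Fix a finitely supported $a=(a_k)$; since the modular and the $l_M$-norm depend only on $|a_k|$, assume $a_k\ge0$. Because the blocks $\overline{x}_k$ live on the pairwise disjoint intervals $[k-1,k)$ and $x\ge0$, the modular of $w=\sum_{k=1}^n a_k\overline{x}_k$ factors:
$$\int_0^\infty\min\Big\{\frac{w(s)}{\lambda},\frac{w(s)^2}{\lambda^2}\Big\}\,ds=\sum_{k=1}^n\frac{a_k}{\lambda}\int_0^1\min\Big\{x(u),\frac{a_k}{\lambda}x(u)^2\Big\}\,du=\sum_{k=1}^n\Theta_x\Big(\frac{a_k}{\lambda}\Big),$$
where $\Theta_x(t):=t\int_0^1\min\{x(u),tx(u)^2\}\,du$. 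Thus $\|w\|_{L_1+L_2}$ is equivalent to the Luxemburg functional of $(a_k)$ generated by $\Theta_x$, and the identical computation for $y$ produces $\Theta_y(t):=t\int_0^1\min\{y(u),ty(u)^2\}\,du$. By Lemma \ref{sharpness lemma} we have $\Theta_x(t)\sim\Theta_y(t)\sim t/\log(e/t)\sim M(t)$ for $t\in(0,1]$.

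It remains to pass from the Luxemburg functional of $\Theta_x$ (and of $\Theta_y$) to $\|(a_k)\|_{l_M}$, and for this I would check that only bounded arguments occur at the critical scaling. Since $x\ge1$ on its support and $\int_0^1 x=1$, one has $\Theta_x(t)=t$ for $t\ge1$; hence at the $\lambda_0$ realizing the infimum, where $\sum_k\Theta_x(a_k/\lambda_0)=1$, each term satisfies $\Theta_x(a_k/\lambda_0)\le1$, forcing $a_k/\lambda_0\le1$ (and for $y$ the analogous bound keeps the arguments in a fixed interval $(0,C_0]$). Likewise $M(1)=1$ forces $a_k/\rho_0\le1$ at $\rho_0=\|(a_k)\|_{l_M}$. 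On $(0,C_0]$ the three functions are equivalent (on $(0,1]$ by Lemma \ref{sharpness lemma}, and on $[1,C_0]$ trivially, being bounded between positive constants), so the equivalence $\Theta_x\sim M$ gives $\sum_kM(a_k/\lambda_0)\in[c^{-1},c]$; the $\Delta_2$-condition of $M$ at $0$ then converts this two-sided modular bound into $\lambda_0\sim\rho_0$. The same reasoning applies to $y$, so both $\|\sum_k a_k\overline{x}_k\|_{L_1+L_2}$ and $\|\sum_k a_k\overline{y}_k\|_{L_1+L_2}$ are equivalent to $\|(a_k)\|_{l_M}$, hence to each other.

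The only genuinely delicate step is the first reduction: controlling $\|w\|_{L_1+L_2}$ by the $\Phi$-modular with constants independent of $a$ and $n$. This is where the Holmstedt split must be executed carefully, verifying that the floating truncation level $w^*=\lambda$ and the fixed Holmstedt cutoff $s=1$ interact only up to universal constants. Once this uniform equivalence is in place, the disjoint factorization is exact and the passage $\Theta_x,\Theta_y\rightsquigarrow M$ is routine, the sole subtlety being the observation that the critical scaling confines all arguments to a fixed interval on which Lemma \ref{sharpness lemma} supplies the required equivalence.
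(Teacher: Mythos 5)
Your proposal is correct and takes essentially the same route as the paper: both identify $\|\cdot\|_{L_1+L_2}$ with an Orlicz-type functional for a function equivalent to $\min\{u,u^2\}$, factor the modular over the disjoint supports of the $\overline{x}_k$ (resp.\ $\overline{y}_k$), and invoke Lemma \ref{sharpness lemma} to identify the resulting sequence Orlicz function with $t/\log(e/t)$. The only difference is cosmetic: the paper replaces $\min\{u,u^2\}$ by the equivalent \emph{convex} Orlicz function $N$ (with $N(t)=t^2$ on $(0,1)$ and $N(t)=2t-1$ for $t\geq 1$), so the disjoint-support computation becomes an exact Luxemburg-norm identity $\|\sum_k a_k\overline{x}_k\|_{L_N}\leq 1\Longleftrightarrow\|a\|_{l_M}\leq 1$, which absorbs the bounded-argument and $\Delta_2$ verifications you carry out by hand for the non-convex $\Theta_x$, $\Theta_y$.
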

\begin{proof} Define the Orlicz function $N$ by setting
$$N(t)=
\begin{cases}
t^2,\quad t\in(0,1)\\
2t-1,\quad t\geq 1.
\end{cases}
$$
It is easy to check that $\|z\|_{L_1+L_2}\sim\|z\|_{L_N}$ for every $z\in L_1+L_2,$ where
$L_N$ is the function Orlicz space on $[0,1].$

Setting
$$M(t)=\int_0^1N(tx(s))\,ds,\quad t>0,$$
we obtain
\begin{gather*}
\begin{split}
\|\sum_{k=1}^\infty a_k\overline{x}_k\|_{L_N}\leq 1&\Longleftrightarrow
\int_0^\infty N(\sum_{k=1}^\infty|a_k||\overline{x}_k(s)|)\,ds\leq 1\\
&\Longleftrightarrow \sum_{k=1}^\infty \int_0^1N(|a_k||x_k(s)|)\,ds\leq 1\\
&\Longleftrightarrow \sum_{k=1}^\infty M(a_k)\leq 1\Longleftrightarrow \|a\|_{l_M}\leq 1.
\end{split}
\end{gather*}
Therefore,
$$\|\sum_{k=1}^\infty a_k\overline{x}_k\|_{L_1+L_2}\sim\|a\|_{l_M}.$$
Since $N(t)\sim\min\{t,t^2\}$ $(t>0),$ it follows that
$$M(t)\sim\int_0^1\min\{tx(s),(tx(s))^2\}\,ds,$$
and from Lemma \ref{sharpness lemma} it follows that
$$M(t)\sim \frac{t}{\log(e/t)},\quad 0<t\le 1.$$
This proves the assertion for the sequence $\{x_k\}$. The proof of the similar assertion for $\{y_k\}$ is the same.
\end{proof}

\end{document}